\documentclass[11pt]{amsart}
\usepackage{amsmath,amsfonts,amsthm,amssymb,amscd, verbatim, graphicx}
\def\classification#1{\def\@class{#1}}
\classification{\null}
\textwidth 15cm
\DeclareFontFamily{OT1}{rsfs}{}
\DeclareFontShape{OT1}{rsfs}{n}{it}{<-> rsfs10}{}
\DeclareMathAlphabet{\mathscr}{OT1}{rsfs}{n}{it}

\newcommand{\Fq}{\mathbb{F}_q}
\newcommand{\mindeg}{\mathrm{mindeg}}
\newcommand{\minclass}{\mathrm{minclass}}

\newtheorem{prop}{Proposition}[section]
\newtheorem{thm}[prop]{Theorem}
\newtheorem{conj}[prop]{Conjecture}
\newtheorem{cor}[prop]{Corollary}
\newtheorem{lem}[prop]{Lemma}

\theoremstyle{definition}
\newtheorem{remark}[prop]{Remark}
\numberwithin{equation}{section}
\begin{document}

\title{On the product decomposition conjecture for finite simple groups}

\author{N. Gill, L. Pyber, I. Short, E. Szab\'o}
\address{L\'aszl\'o Pyber and Endre Szab\'o\newline
A. R\'enyi Institute of Mathematics\newline
Hungarian Academy of Sciences\newline
P.O. Box 127\newline
H-1364 Budapest}
\email{pyber@renyi.hu}
\email{endre@renyi.hu}

\address{Nick Gill and Ian Short\newline
Department of Mathematics and Statistics\newline
The Open University\newline
Milton Keynes, MK7 6AA\newline
United Kingdom}
\email{n.gill@open.ac.uk}
\email{i.short@open.ac.uk}
\thanks{L.P. is supported in part by OTKA NK78439 and K84233}
\thanks{E.Sz. is supported in part by OTKA NK81203 and K84233}
\thanks{N.G. would like to thank Harald Helfgott for allowing the use of his research funds to facilitate a visit to the R\'enyi Institute during which work on this paper was initiated. He would also like to thank the University of Bristol to which he has been a regular visitor over the course of writing this paper.}

\subjclass[2010]{20D06, 20D40, 20G40}
\keywords{Conjugacy, doubling lemma, product theorem, simple group}

\begin{abstract}
  We prove that if $G$ is a finite simple group of Lie type and $S$ a subset
  of $G$ of size at least two
  then $G$ is a product of at most $c\log|G|/\log|S|$ conjugates of $S$,
  where $c$ depends only on the Lie rank of $G$.
  This confirms a conjecture of Liebeck, Nikolov and Shalev
  in the case of families of simple groups of bounded rank. We also obtain various
related results about products of conjugates of a set within a group.
\end{abstract}

\maketitle

\section{Introduction}

Our starting point is the following conjecture of Liebeck, Nikolov and Shalev \cite{lns2}.

\begin{conj}\label{c: 2}
There exists an absolute constant $c$ such that if $G$ is a finite simple group and $S$ is a subset of $G$ of size at least two, then $G$ is a product of $N$ conjugates of $S$ for some $N\leq c \log|G|/ \log |S|$.
\end{conj}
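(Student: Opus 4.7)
The plan is to establish the conjecture for $G$ a finite simple group of Lie type of bounded rank, as promised by the abstract, by combining the Pyber--Szab\'o/Breuillard--Green--Tao product theorem with an iterative ``doubling via conjugation'' and a final covering step. After observing that $G$ simple implies any nontrivial $s\in S$ has $G$ as its normal closure, the first task is a \emph{doubling lemma}: given $A\subseteq G$ with $2\leq|A|$ and $|A|^{2}\leq|G|^{1-\epsilon}$, there exists $g\in G$ such that $|A\cdot gAg^{-1}|\geq c|A|^{2}$, where $c>0$ depends only on the Lie rank. To prove this one observes that a collision $a_1(ga_2g^{-1})=a_1'(ga_2'g^{-1})$ is equivalent to $g^{-1}\bigl((a_1')^{-1}a_1\bigr)g=a_2'a_2^{-1}$; counting collisions reduces to counting, for each pair $(x,y)\in(A^{-1}A)\times(AA^{-1})$, how many $g$ satisfy $g^{-1}xg=y$, which is either zero (if $x,y$ are not conjugate) or exactly $|C_G(x)|$. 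Averaging over $g$ and using the standard bounds $|C_G(x)|\leq|G|^{1-\delta}$ for noncentral $x$ in a group of Lie type of bounded rank, together with the product theorem to rule out the degenerate case in which $A$ concentrates in a proper subgroup or subvariety, shows that a random $g$ works up to an absolute constant.

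The second step is to iterate. Setting $A_0=S$ and $A_{j+1}=A_j\cdot g_jA_jg_j^{-1}$ with $g_j$ chosen by the doubling lemma, $A_j$ is a product of $2^j$ conjugates of $S$ satisfying $|A_j|\geq(c|S|)^{2^j}/c$ (or, more sharply, $|A_j|\geq(1-o(1))|S|^{2^j}$ while $|A_j|^{2}\ll|G|^{\delta}$). Take $j$ minimal so that $|A_j|\geq|G|^{1/2}$; then $j=O\bigl(\log\log(|G|/|S|)\bigr)$ and the number of conjugates used is $2^j=O\bigl(\log|G|/\log|S|\bigr)$. The third step is to cover: once we have a product $A$ of $O(\log|G|/\log|S|)$ conjugates of $S$ with $|A|\geq|G|^{1/2}$, a bounded number of further conjugates suffices, either because $A\cdot hAh^{-1}=G$ for generic $h$ by a direct counting argument, or by invoking the Liebeck--Nikolov--Shalev covering results for large subsets of simple groups. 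Combining the iteration and the covering gives $G$ as a product of at most $c'\log|G|/\log|S|$ conjugates of $S$.

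The main obstacle will be the doubling lemma. The set $A\cdot gAg^{-1}$ carries a built-in conjugation twist, so classical additive-combinatorial doubling inequalities do not apply directly; one must control collisions through the $G$-conjugacy structure, and this is exactly what the product theorem is designed for. What is crucial, and what forces the argument into the regime of the product theorem rather than a softer commutator computation, is that the doubling constant $c$ remain bounded away from the trivial value and in particular not deteriorate too quickly as $|S|$ varies: otherwise the iteration loses control of the exponent and produces an $N$ that is only polynomial, rather than linear, in $\log|G|/\log|S|$.
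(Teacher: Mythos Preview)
Your overall architecture---grow $S$ by conjugation, then cover---matches the paper's, but the doubling lemma as you state it is the gap. The collision count you sketch bounds the expected number of nontrivial collisions over random $g$ by $|A|^4/\minclass(G)$, so it yields $|A\cdot gAg^{-1}|\geq c|A|^2$ only while $|A|^2\lesssim\minclass(G)$, which in rank $r$ means roughly $|A|\leq|G|^{1/(16r)}$ by Proposition~\ref{p: conjugacy size}. Beyond that threshold the centralizer bound gives nothing, and the product theorem cannot fill the gap in the way you suggest: Theorem~\ref{t: generating sets} (or its conjugation form, Theorem~\ref{t: 2}) yields only $|SS^g|\geq|S|^{1+\eta}$, not $c|S|^2$. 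Feeding $|A_{j+1}|\geq|A_j|^{1+\eta}$ into your doubling iteration produces $2^j\approx(\log|G|/\log|S|)^{\log 2/\log(1+\eta)}$, i.e.\ exactly the \emph{polynomial} bound of Corollary~\ref{c: polynomial}, not the linear one. So the product theorem does not ``rule out the degenerate case'' in the sense you need; the doubling constant genuinely collapses once $|A|^2$ passes $\minclass(G)$, and as you yourself note in your last paragraph, any such collapse is fatal.

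The paper avoids the problem by separating the two regimes cleanly. First, Lemma~\ref{l: get a big set} uses a greedy collision-free step (find $g$ with $X^{-1}X\cap gSS^{-1}g^{-1}=\{1\}$, so that $|X\cdot gSg^{-1}|=|X|\,|S|$ \emph{exactly}) to build a product $X$ of $m$ conjugates of $S$ with $|X|=|S|^m\geq\minclass(G)^{1/4}$; there is no multiplicative loss whatsoever, and the product theorem is not used here at all. Second, once $|X|\geq\minclass(G)^{1/4}$, Proposition~\ref{p: conjugacy size} gives $\log|G|/\log|X|\leq 32r$, so even the polynomial bound of Corollary~\ref{c: polynomial} now writes $G$ as a product of at most $3(32r)^d=O_r(1)$ conjugates of $X$, hence $O_r(m)=O_r(\log|G|/\log|S|)$ conjugates of $S$. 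Your plan becomes correct if you lower the iteration target from $|G|^{1/2}$ to $\minclass(G)^{\Theta(1)}$ and treat the product theorem purely as the covering device rather than as a source of quadratic growth. (Your covering step at $|A|\geq|G|^{1/2}$ is also not immediate: Proposition~\ref{p: gowers} needs $|A|\geq|G|/\mindeg(G)^{1/3}\approx|G|^{1-1/(24r^2)}$, so you would in any case need several rounds of Theorem~\ref{t: 2} to finish---which is fine once $\log|G|/\log|A|$ is bounded, but again only via the polynomial mechanism.)
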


Note that we must have $N\ge\log|G|\big/\log|S|$ by order considerations, and so the bound above is best possible up to the value of the constant $c$.

The conjecture is an extension of a deep (and widely applied) theorem of
Liebeck and Shalev. Indeed, the main result of \cite{lieshal} states that the
above conjecture holds when $S$ is a conjugacy class or, more generally, a
normal subset (that is, a union of conjugacy classes) of $G$.
In \cite{lns2} Conjecture~\ref{c: 2} is also proved for
sets of bounded size.

Somewhat earlier Liebeck, Nikolov and Shalev \cite{lns} posed the following (still unproved) weaker conjecture.

\begin{conj}\label{c: 1}
There exists an absolute constant $c$ such that if $G$ is a finite simple group and $H$ is any nontrivial subgroup of $G$, then $G$ is a product of $N$
conjugates of $H$ for some $N\leq c \log|G|/ \log |H|$.
\end{conj}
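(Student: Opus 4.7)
My plan is to combine a growth (doubling) argument with a mixing argument, mirroring the philosophy used for the subset version in the present paper. Starting from $S_0 = H$, I would iteratively form $S_{k+1} = S_k \cdot g_k H g_k^{-1}$ with well-chosen conjugators $g_k \in G$, aiming to force multiplicative growth $|S_{k+1}| \ge |S_k| \cdot |H|^{\delta}$ for some absolute $\delta > 0$. Once $|S_k| \ge |G|^{1-\epsilon}$, a final appeal to Gowers's mixing theorem finishes, since finite simple groups are $|G|^{c}$-quasirandom by Landazuri--Seitz together with the analogous bounds on minimal representations of alternating and sporadic groups.

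The main lemma to establish is the growth step. Simplicity of $G$ already shows that for $S \notin \{\emptyset, G\}$ some conjugate $gHg^{-1}$ satisfies $S \cdot g H g^{-1} \supsetneq S$: otherwise $S$ would be stable under the subgroup generated by all conjugates of $H$, which is normal and nontrivial, hence equal to $G$. However, such a qualitative statement only yields a linear bound on the iteration count. To upgrade to multiplicative growth one must choose $g$ so that $gHg^{-1}$ contributes many fresh cosets; the cleanest target is the averaged statement that $|S \cdot gHg^{-1}|/|S| \ge |H|^{\delta}$ on average over $g \in G$ whenever $|S| \le |G|^{1-\epsilon}$. A promising route is to pick $x \in H \setminus \{1\}$ and exploit the Liebeck--Shalev theorem applied to the normal subset $C = x^G$, which gives $C^n = G$ for $n \le c\log|G|/\log|C|$; any $n$-fold product of conjugates of $H$ contains a corresponding product of conjugates of $x$, so the union over conjugator tuples of the $n$-fold products of conjugates of $H$ covers $G$ in the right number of steps. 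A pigeonhole or averaging argument must then extract a single tuple of conjugators whose product itself equals $G$.

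The hard part is executing the growth lemma uniformly across all finite simple groups. For bounded Lie rank the product theorems of Helfgott, Pyber--Szab\'o and Breuillard--Green--Tao force growth of arbitrary generating sets; this is precisely the tool that the present paper uses to establish the stronger Conjecture~\ref{c: 2} in that regime. In unbounded rank these product theorems fail, and the argument must instead exploit subgroup-specific structure: cycle types of elements of $H \le A_n$ in the alternating case, and the $H$-action on the natural module in the classical case. A concrete sub-problem whose resolution would essentially reduce Conjecture~\ref{c: 1} to Liebeck--Shalev is the following ``minimum class size'' statement: every nontrivial subgroup $H$ of a finite simple group $G$ contains an element $x$ with $|x^G| \ge |H|^{\alpha}$ for some absolute $\alpha > 0$, uniformly in the Lie rank of $G$. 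The absence of such a uniform bound is the central obstacle that leaves Conjecture~\ref{c: 1} open.
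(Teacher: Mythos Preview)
The statement you are addressing is Conjecture~\ref{c: 1}, which the paper explicitly introduces as ``still unproved''; there is no proof of it in the paper to compare your proposal against. What the paper does establish is the bounded-rank case, and it does so by proving the strictly stronger Conjecture~\ref{c: 2} in that regime (Theorem~\ref{t: gps}). You correctly identify this: your remark that the product theorems of Helfgott, Pyber--Szab\'o and Breuillard--Green--Tao handle bounded Lie rank, and that ``this is precisely the tool that the present paper uses,'' is accurate, and your final sentence rightly concludes that the conjecture remains open.

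Your sketch of a possible strategy is reasonable as a heuristic, but note that the ``minimum class size'' sub-problem you isolate would not by itself reduce Conjecture~\ref{c: 1} to the Liebeck--Shalev theorem. If $x\in H$ satisfies $|x^G|\ge|H|^\alpha$ and $(x^G)^m=G$, one only obtains $G=\bigcup_{g_1,\dots,g_m} H^{g_1}\cdots H^{g_m}$, a covering by products of $m$ conjugates of $H$; extracting a \emph{single} tuple with $H^{g_1}\cdots H^{g_m}=G$ is an additional step that a naive pigeonhole does not supply. (The paper's proof of Theorem~\ref{t: gps} avoids this issue entirely by working with the Product theorem and Gowers mixing, rather than by passing through a single conjugacy class.) So even granting your sub-problem, a further idea is needed, and in any case the sub-problem itself is open in unbounded rank.
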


Conjecture \ref{c: 1} itself represents a dramatic generalization of a host of earlier work on product decompositions of finite simple groups, most of which prove Conjecture \ref{c: 1} for particular subgroups $H$.  For instance, in \cite{liepy} it is proved that a finite simple group of Lie type in characteristic $p$ is a product of 25 Sylow $p$-subgroups (see also \cite{bnp} for a recent improvement from 25 to 5).

Further positive evidence for Conjecture \ref{c: 1} is provided by \cite{lns3}, \cite{lub} and \cite{nikolov} (when $H$ is of type $SL_n$). Certain results of this type are essential to prove that finite simple groups can be made into expanders (see the announcement \cite{kln}).

The main purpose of this note is to prove Conjecture \ref{c: 2} for finite simple groups of Lie type of bounded rank. Put another way, we prove a version of Conjecture \ref{c: 2} in which the constant $c$ depends on the rank of the group $G$. Our main result follows.

\begin{thm}\label{t: gps}
Fix a positive integer $r$. There exists a constant $c=c(r)$ such that if $G$ is a finite simple group of Lie type of rank $r$ and $S$ is a subset of $G$ of size at least two then $G$ is a product of $N$ conjugates of $S$ for some $N\leq c \log|G|/ \log |S|$.
\end{thm}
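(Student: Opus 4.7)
The plan is to combine the product theorem for finite simple groups of Lie type of bounded rank---due to Pyber--Szab\'o and independently to Breuillard--Green--Tao---with a covering argument by pigeonhole. The product theorem provides constants $\delta,\varepsilon>0$ depending only on $r$ such that every generating subset $A$ of such $G$ with $|A|\leq|G|^{1-\delta}$ satisfies $|A^3|\geq|A|^{1+\varepsilon}$. After routine reductions (symmetrising $S$, adjoining the identity, and prepending a bounded number of conjugates so that $S$ generates $G$) the argument splits into a growth phase and a short covering phase.

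For the growth phase I would build a chain $S=A_0\subseteq A_1\subseteq\cdots\subseteq A_k$ with $A_{i+1}=A_i\cdot S^{g_i}$ for carefully chosen $g_i\in G$, stopping when $|A_k|\geq|G|/2$. The technical heart is a doubling lemma (the keyword in the paper's title): for every $A\subseteq G$ with $|A|\leq|G|/2$ there exists $g\in G$ with $|A\cdot S^g|\geq c|A|\cdot|S|^{\alpha}$, where $c,\alpha>0$ depend only on $r$. Granted this, the chain reaches $|G|/2$ after $k=O(\log|G|/\log|S|)$ steps. The heuristic for why such a $g$ should exist is that collisions $as_1^g=bs_2^g$ in $A\cdot S^g$ are counted by intersections of $g$-conjugates of $S^{-1}S$ with $A^{-1}A$, whose mean size over $g\in G$ is $|A|^2|S|^2/|G|$, so $|A\cdot S^g|\geq|A|\cdot|S|/2$ for typical $g$ whenever $|A||S|\leq|G|/4$.

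The covering phase is then immediate: once $|A_k|\geq|G|/2$, for every $h\in G$ one has $|A_k|+|hA_k^{-1}|>|G|$, so $A_k\cap hA_k^{-1}\neq\emptyset$, and hence $A_k\cdot A_k=G$. This uses a total of $2k=O(\log|G|/\log|S|)$ conjugates of $S$, as required.

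The main obstacle will be making the doubling lemma precise. The averaging argument handles unstructured $A$, but one must exclude configurations in which every $g$ gives poor growth---such an $A$ would have to behave like an approximate subgroup of $G$, which in bounded rank is highly constrained by the classification of approximate subgroups of Pyber--Szab\'o and Breuillard--Green--Tao. Controlling this case, while ensuring that the per-step growth is by a factor $|S|^\alpha$ rather than merely a bounded factor (which would only give $O(\log|G|)$ conjugates, the right answer only for bounded $|S|$), is precisely where the bounded rank hypothesis enters, and is likely to be the main technical difficulty.
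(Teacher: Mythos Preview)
Your key doubling lemma is false as stated. If $|A|$ is just below $|G|/2$ then $|A\cdot S^g|\le|G|<2|A|$ for every $g$, so $|A\cdot S^g|\ge c|A|\,|S|^\alpha$ would force $|S|<(2/c)^{1/\alpha}$, contradicting the fact that $|S|$ may be as large as $|G|^{1-\delta}$. More generally, growth by a fixed power of $|S|$ per step is impossible once $|A|>|G|/|S|^{\alpha}$, so the chain cannot be run all the way to $|G|/2$ on a uniform estimate of this shape. Your averaging heuristic also needs correction: for \emph{conjugates} the expected collision count involves centraliser sizes, so the threshold is governed by $\minclass(G)$ rather than $|G|$; and the appeal to approximate subgroups does not obviously help, since ``$|A\cdot S^g|$ small for every $g$'' is not the tripling hypothesis $|A^3|\le K|A|$ that the Pyber--Szab\'o/Breuillard--Green--Tao structure theorems require, particularly when $|A|$ and $|S|$ are of very different sizes.

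The paper sidesteps all of this with a two-phase argument that never needs per-step growth by $|S|^\alpha$ in the large-$A$ regime. First, from the Product theorem one derives (via a pigeonhole and Petridis-type argument) that either $S^3=G$ or $|SS^g|\ge|S|^{1+\varepsilon}$ for some $g$; iterating this yields only a \emph{polynomial} bound $N\le 3(\log|G|/\log|S|)^{d}$, which is too weak in general but is already the desired linear bound whenever $\log|G|/\log|S|$ is itself bounded in terms of $r$. Second, a purely combinatorial lemma handles small $S$: as long as some conjugate of $SS^{-1}\setminus\{1\}$ is disjoint from $X^{-1}X$, one may append a conjugate of $S$ to $X$ and gain an \emph{exact} factor $|S|$, and this can be continued until $|X|\ge\sqrt{\minclass(G)}/|S|$. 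Since $\minclass(G)\ge|G|^{1/(8r)}$ in rank $r$, one reaches $|X|=|S|^m\ge|G|^{1/(32r)}$ using exactly $m$ conjugates; the polynomial bound applied to $X$ then costs only $O_r(1)$ further factors, giving $N\le O_r(1)\cdot m=O_r(\log|G|/\log|S|)$. The idea you are missing is not to grow uniformly to $|G|/2$, but to grow \emph{exactly} while small and then invoke a crude bound once $\log|G|/\log|X|$ is bounded.
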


In \cite{lns2} a weaker bound of the form $N\leq\big(\log|G|/ \log |S|\big)^{c(r)}$ is obtained. Also, in \cite{lns}, Theorem~\ref{t: gps} is proved when $S$ is a maximal subgroup of $G$.

As a byproduct of our proof we obtain two results of independent interest. In these results, and throughout the paper, we denote by $S^g$ the conjugate $g^{-1}Sg$ of a subset $S$ of a group $G$ by an element $g$ of $G$, and, given a positive integer $m$, we denote by $S^m$ the product $SS\dotsb S$ of $m$ copies of $S$. There should be no confusion between these two similar notations because the type of the exponent will always be given.

\begin{thm}\label{t: 2}
Fix a positive integer $r$. There exists a positive constant $\varepsilon=\varepsilon(r)$ such that if $G$ is a finite simple group of Lie type of rank $r$ and $S$ is a subset of $G$ then for some $g$ in $G$ we have $\left|S S^g\right|\ge|S|^{1+\varepsilon}$ or $S^3=G$.
\end{thm}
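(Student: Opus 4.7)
We argue by contrapositive: assume $|SS^g| < |S|^{1+\varepsilon}$ for every $g \in G$, and aim to conclude $S^3 = G$, provided $\varepsilon = \varepsilon(r)$ is chosen small enough. A useful first reformulation is that $SS^g = Sg^{-1}Sg$, so $|SS^g| = |Sg^{-1}S|$; the hypothesis therefore becomes $|SxS| < |S|^{1+\varepsilon}$ for every $x \in G$, a very strong uniform ``approximate normality'' condition on $S$. In particular, taking $x = e$ gives small doubling $|S^2| < |S|^{1+\varepsilon}$.

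Next I would apply a dichotomy on the size of $S$: either $|S|$ is already so large, say $|S| \geq |G|^{1-\eta(r)}$, that $S^3 = G$ follows from a Gowers-style quasi-random argument exploiting the fact that every non-trivial irreducible representation of a finite simple group of Lie type of rank $r$ has dimension at least $|G|^{c(r)}$; or $|S| \leq |G|^{1-\eta(r)}$, in which case the product theorem of Pyber--Szab\'o and Breuillard--Green--Tao provides a constant $\delta = \delta(r) > 0$ such that $|S^3| \geq |S|^{1+\delta}$ whenever $\langle S\rangle = G$. The case $\langle S\rangle \neq G$ is handled separately: since $G$ is simple, the normal closure of $\langle S\rangle$ is $G$, so a bounded number of conjugates of $S$ already generate $G$, and the uniform hypothesis transfers (with constants depending only on $r$) to the enlarged generating set.

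The core of the proof, and its main obstacle, is then to derive an \emph{upper} bound on $|S^3|$ forced by the hypothesis, specifically $|S^3| \leq |S|^{1+C(r)\varepsilon}$ for some constant $C(r)$. The naive union bound $|S^3| \leq \sum_{s \in S} |SsS| \leq |S|^{2+\varepsilon}$ is far too weak, so one has to exploit the heavy overlaps between the double-translates $SsS$ as $s$ varies over $S$. The natural tools here are non-commutative Pl\"unnecke--Ruzsa type estimates in the spirit of Tao's product-set estimates; the uniform hypothesis $|SxS| \leq |S|^{1+\varepsilon}$ for \emph{all} $x \in G$ is considerably stronger than plain doubling, and should allow such estimates to upgrade doubling control into tripling control. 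Once the bound $|S^3| \leq |S|^{1+C\varepsilon}$ is in place, combining it with the product-theorem lower bound $|S^3| \geq |S|^{1+\delta}$ forces $\varepsilon \geq \delta/C$; choosing $\varepsilon(r) := \delta(r)/(2C(r))$ then yields the contradiction, completing the proof.
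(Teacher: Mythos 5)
Your reformulation $|SS^g|=|Sg^{-1}S|$ is correct, and for the case $\langle S\rangle=G$ your route is sound and in fact more direct than the paper's: apply the Product theorem to $S$ itself, and note that the growth alternative $|S^3|\ge|S|^{1+\delta}$ is incompatible with an upper bound $|S^3|\le|S|^{1+C\varepsilon}$, forcing $S^3=G$. The step you single out as the ``main obstacle'' is not actually open: it is exactly Petridis' result (Lemma~\ref{l: petridis} above, \cite[Theorem 4.4]{petridis}), which from $|S^2|\le J|S|$ and $|SgS|\le K|S|$ for all $g\in S$ gives $|S^3|\le J^7K|S|$; with $J=K=|S|^{\varepsilon}$ this is $|S^3|\le|S|^{1+8\varepsilon}$. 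So you should cite that lemma rather than leave the tripling bound as a conjecture. (Your size dichotomy and the Gowers step are then superfluous in the generating case, since ``$S^3=G$'' is the desired conclusion, not something to be contradicted.)

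The genuine gap is the case $\langle S\rangle\neq G$. First, your claim that boundedly many conjugates of $S$ generate $G$ (with a bound depending only on $r$) is asserted without proof; the obvious chain argument only bounds the number of conjugates by the subgroup chain length, which grows with $q$. The paper avoids this entirely by adjoining a single element $g$ so that $T=S\cup\{g\}$ generates, using $\tfrac32$-generation. Second, and more seriously, even granting a generating enlargement $S'$ built from conjugates of $S$, the conclusion does not ``transfer'' in the direction you need. When $\langle S\rangle\neq G$ we have $S^3\subseteq\langle S\rangle\neq G$, so the theorem demands that you exhibit $g$ with $|SS^g|\ge|S|^{1+\varepsilon}$; the Product theorem applied to $S'$ only tells you that $(S')^3$ is either all of $G$ or large, and converting largeness of $(S')^3$ into largeness of some $SS^g$ requires decomposing $(S')^3$ into pieces of the form (translates of) $SxS$ and $SxSyS$ and bounding the latter in terms of the former --- this is precisely the pigeonhole-on-eight-pieces argument the paper runs on $T^3$, combined with Petridis' lemma, and with Gowers' trick plus the Landazuri--Seitz bounds to guarantee that $T^3$ really is much larger than $S$ in the subcase $T^3=G$. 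None of this back-transfer appears in your proposal, so the non-generating case remains unproved.
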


The next theorem is similar, but concerns only normal subsets, in which case we obtain absolute constants.

\begin{thm}\label{t: normal subsets}
There exists $\varepsilon>0$ and a positive integer $b$ such that if $G$ is a finite simple group and $S$ is a normal subset of $G$ then $\left|S^2\right|\ge|S|^{1+\varepsilon}$ or $S^b=G$.
\end{thm}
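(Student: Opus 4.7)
Since $S$ is a normal subset of $G$, we have $S^g=S$ for every $g\in G$, so $SS^g=S^2$ and the statement we want is a clean dichotomy between a doubling estimate and a bounded-length product decomposition. Without loss of generality we may assume $S$ is also symmetric, by replacing $S$ with $S\cup S^{-1}$ at the cost of a factor of $2$ in size (which is absorbed by adjusting $\varepsilon$). The plan is to combine two ingredients: the Liebeck--Shalev theorem from \cite{lieshal}, and a Pl\"unnecke-type estimate for normal subsets.

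The first ingredient supplies an absolute constant $c_0$ such that for every finite simple group $G$ and every normal subset $S$ with $|S|\ge 2$,
$$S^N = G \quad\text{for some}\quad N\le c_0\,\log|G|\big/\log|S|.$$
The second ingredient is the following lemma that I would establish separately: \emph{there is an absolute constant $C$ such that for every finite group $G$ and every symmetric normal subset $S$ with $|S^2|\le K|S|$, one has $|S^n|\le K^{Cn}|S|$ for every integer $n\ge 1$}. This is where the work lies; the aim would be to derive it from the non-commutative Ruzsa--Pl\"unnecke machinery (in the style of Tao's triangle and tripling inequalities), exploiting the fact that normal subsets of $G$ form a commutative monoid under multiplication, and that for symmetric normal $S$ we have $S\cdot S^{-1}=S^2$, so the usual hypothesis $|SS^{-1}|\le K|S|$ is automatic once doubling is small.

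Granting the lemma, the proof is immediate. Set $L=\log|G|/\log|S|$. Assume $|S^2|<|S|^{1+\varepsilon}$, i.e.\ $K<|S|^{\varepsilon}$. Liebeck--Shalev gives $S^N=G$ for some $N\le c_0 L$, and the lemma then yields
$$|G|=|S^N|\le K^{CN}|S|\le |S|^{1+CN\varepsilon},$$
so $L\le 1+CN\varepsilon\le 1+Cc_0\varepsilon L$. Choosing $\varepsilon<1/(2Cc_0)$ forces $L\le 2$, whence $N\le 2c_0$; we may then take $b=\lceil 2c_0\rceil$.

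The main obstacle is Step~2: obtaining a Pl\"unnecke bound of the form $|S^n|\le K^{O(n)}|S|$ for normal $S$ with a constant in the exponent that is independent of $n$. The naive Ruzsa triangle inequality $|S^{i+j}|\,|S|\le |S^{i+1}|\,|S^{j+1}|$ is not by itself enough: specializing to $(i,j)=(1,2)$ collapses to a tautology and does not bound $|S^3|$ in terms of $|S^2|$. So one must run a more delicate iteration, presumably via Ruzsa's covering lemma adapted to the normal/commuting-set setting, or appeal to Tao's non-commutative Pl\"unnecke--Ruzsa inequalities for symmetric sets. Once this lemma is in hand, the combination with Liebeck--Shalev is essentially mechanical, and the absolute nature of $\varepsilon$ and $b$ follows because both inputs have absolute constants.
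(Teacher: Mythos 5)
Your overall architecture is exactly right and matches the paper's: combine the Liebeck--Shalev theorem (Theorem~\ref{t: normal}) with a Pl\"unnecke-type power bound for normal subsets, then play the two off against each other around the threshold $|S|=\sqrt{|G|}$. The final arithmetic in your ``granting the lemma'' paragraph is also correct. But the proof is not complete, and you say so yourself: the entire mathematical content of the theorem beyond Liebeck--Shalev is the lemma you call Step~2, namely that small doubling of a normal set propagates to all powers, and you only list possible routes to it (Ruzsa covering, Tao's non-commutative inequalities) without carrying any of them out. As you correctly observe, the Ruzsa triangle inequality alone does not bound $|S^3|$ in terms of $|S^2|$, so this is a genuine missing step, not a routine verification.

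The paper closes precisely this gap with Theorem~\ref{t: normal doubling}, which gives the clean bound $|B^m|\le K^m|B|$ (so your constant $C$ can be taken to be $1$, and no symmetry hypothesis is needed). The proof is a short induction on $m$ using Petridis's lemma (Lemma~\ref{l: petridis 2}, \cite[Proposition 2.1]{petridis}): choose $X\subseteq A$ minimizing $|ZB|/|Z|$ over nonempty $Z\subseteq A$; then $|CXB|\le |CX|\,|XB|/|X|$ for every finite $C$, and normality of $B$ lets one write $|XB^{m+1}|=|B^{m}XB|\le |B^{m}X|\,|XB|/|X|\le K^{m+1}|X|$. This is the tool you need to complete your argument. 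A further small point: your reduction to symmetric $S$ is both unnecessary (the lemma above needs no symmetry) and not obviously harmless, since the conclusion $(S\cup S^{-1})^b=G$ does not immediately yield $S^{b'}=G$ for the original set; that branch would require an extra argument.
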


Theorem~\ref{t: normal subsets} relates to a result of Shalev \cite[Theorem 7.4]{shalev}, which we strengthen in Section~\ref{s: shalev}.

Note that the theorem would not be true were we to consider sets that are not normal. For instance, take $S$ to be a maximal parabolic subgroup in $G=PSL_n(q)$ with index $\frac{q^n-1}{q-1}$. Clearly $S^b=S$ for all positive integers $b$; on the other hand, for any positive number $\varepsilon$, and any $g$ in $G$, we have $|S S^g|\leq |G|\leq |S|^{1+\varepsilon}$ once $n$ is large enough. We conclude that neither of the given options can hold in this more general situation.

Theorems~\ref{t: 2} and \ref{t: normal subsets}, and the remarks of the previous paragraph, lead us to make the following conjecture.

\begin{conj}\label{c: 3}
There exists $\varepsilon>0$ and a positive integer $b$ such that if $S$ is a subset of a finite simple group $G$ then for some $g$ in $G$ we have $\left|S S^g\right|\ge|S|^{1+\varepsilon}$ or $G$ is the product of $b$ conjugates of $S$.
\end{conj}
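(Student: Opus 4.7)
The plan is to combine Theorems~\ref{t: 2} and~\ref{t: normal subsets} via a doubling step that upgrades local information about the products $SS^g$ into global information about the normal closure $S^G = \bigcup_{g\in G}S^g$. Alternating and sporadic groups can be handled by separate arguments (the sporadic case being trivial, and the alternating case via cycle-type considerations coupled with Theorem~\ref{t: normal subsets}), so the core of the proof concerns a finite simple group $G$ of Lie type of arbitrary rank. Arguing by contradiction, I would assume that $|SS^g|<|S|^{1+\varepsilon}$ for every $g\in G$ and aim to conclude that $G$ is a product of a bounded number of conjugates of $S$.

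The first main step is to promote the hypothesis to a bound on the normal closure: $\bigl|S^G\bigr|\le|S|^{1+\eta(\varepsilon)}$ with $\eta(\varepsilon)\to 0$ as $\varepsilon\to 0$. Heuristically, since $S$ barely interacts multiplicatively with any of its conjugates, all conjugates of $S$ should essentially fit inside a single set of comparable size. The technical engine here would be a non-commutative Ruzsa covering argument: the inequality $|SS^g|\le K|S|$ with $K=|S|^{\varepsilon}$ forces each $S^g$ to be covered by $O(K)$ translates of $S^{-1}S$, and iterating this covering across all conjugating elements, while carefully tracking how covers combine under the conjugation action, should yield the desired bound on $\bigl|S^G\bigr|$.

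Once such a bound is in place, apply Theorem~\ref{t: normal subsets} to the normal subset $S^G$. In one alternative, $\bigl|(S^G)^2\bigr|\ge\bigl|S^G\bigr|^{1+\varepsilon_0}$ for an absolute $\varepsilon_0$; but $(S^G)^2$ is a union of conjugate products $S^gS^h$, and by pigeonhole one of these has size at least $|S|^{1+\varepsilon_0/3}$, say, which (after a conjugation to put one factor in the form $S$) contradicts the working hypothesis provided $\varepsilon$ was chosen small enough. In the other alternative, $(S^G)^{b_0}=G$ for an absolute $b_0$; since this set is a union of products of $b_0$ conjugates of $S$, a further pigeonhole yields a single such product equal to $G$, giving the conclusion of Conjecture~\ref{c: 3} with $b=b_0$.

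The main obstacle I expect is the first step. Non-commutative Plünnecke--Ruzsa inequalities are notoriously delicate, and controlling $\bigl|S^G\bigr|$ from pairwise information $|SS^g|$ requires genuine use of the conjugation action rather than just the multiplicative structure. A closely related difficulty is the removal of rank-dependence from the constants: Theorem~\ref{t: 2} already achieves the growth-or-cover dichotomy for bounded rank, but its proof relies on the Pyber--Szab\'o product theorem, whose constants degrade with the Lie rank. Thus, proving Conjecture~\ref{c: 3} with absolute constants is morally equivalent to obtaining a rank-uniform growth statement for simple groups of Lie type, which appears to be the deepest aspect of the problem and is intertwined with several other open questions in the area.
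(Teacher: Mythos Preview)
The statement you are attacking is presented in the paper as an \emph{open conjecture}; the paper does not prove it unconditionally. What the paper does establish (in Section~\ref{s: skew doubling}) is the implication Conjecture~\ref{c: 2} $\Rightarrow$ Conjecture~\ref{c: 3}, and the engine there is not Theorem~\ref{t: normal subsets} but the Skew doubling lemma (Lemma~\ref{l: skew doubling}): if $|SS^g|\le K|S|$ for every $g$, then any product $S_1\cdots S_m$ of conjugates of $S^{\pm1}$ has size at most $K^{14(m-1)}|S|$. Feeding in the bound $m\le c\log|G|/\log|S|$ from Conjecture~\ref{c: 2} forces $K\ge|S|^{1/(28c)}$ whenever $|S|\le|G|^{1/2}$, while for $|S|>|G|^{1/2}$ a bounded number of conjugates already covers $G$.

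Your direct route has a genuine gap in the pigeonhole steps. From $(S^G)^{b_0}=G$ you only know that the \emph{union} $\bigcup_{g_1,\dots,g_{b_0}} S^{g_1}\cdots S^{g_{b_0}}$ equals $G$; this does not force any single product $S^{g_1}\cdots S^{g_{b_0}}$ to equal $G$. Concretely, take $S=\{1,x\}$ with $x\neq 1$: then $S^G=\{1\}\cup x^G$ is a normal set, and $(S^G)^{b_0}=G$ for a bounded $b_0$ by Theorem~\ref{t: normal}, yet every individual product $S^{g_1}\cdots S^{g_{b_0}}$ has at most $2^{b_0}$ elements. The same obstruction kills step~5: the inequality $|(S^G)^2|\ge|S^G|^{1+\varepsilon_0}$ gives no lower bound on any particular $|S^gS^h|$, since the number of pairs $(g,h)$ swamps whatever gain you get.

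Your step~3 is also unjustified. Ruzsa covering shows each $S^g$ lies in $O(K)$ left translates of $S^{-1}S$, but those translates depend on $g$ and nothing controls the union over all $g\in G$; in particular $S^G$ always contains the full conjugacy class $s^G$ of every $s\in S$, and the hypothesis $|SS^g|\le K|S|$ carries no information about $|s^G|$. You are right that the essential difficulty is rank-uniformity, but the reduction to Theorem~\ref{t: normal subsets} via the normal closure does not go through as stated; this is precisely why the paper settles for the conditional implication from Conjecture~\ref{c: 2} and leaves Conjecture~\ref{c: 3} open.
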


Note that, by Theorems~\ref{t: gps}~and~\ref{t: 2}, Conjectures~\ref{c: 2},~\ref{c: 1}~and~\ref{c: 3} hold for all exceptional simple groups. Note too that all three conjectures could be phrased in terms of \emph{translates} of the set $S$, rather than conjugates. This follows from the simple fact that a product of translates of $S$ is equal to a translate of a product of conjugates of $S$. Similarly a product of conjugates of a translate of $S$ is equal to a translate of a product of conjugates of $S$, a fact which will be useful in its own right.

It is possible that Conjecture~\ref{c: 3} actually holds with $b=3$. When
$b=2$ counterexamples are given by large non-real conjugacy classes (see the
 final section of \cite{shalev} for some related issues).
Further counterexamples are given by
certain
families of maximal subgroups (see for example \cite[Corollary 2]{lps},
which states that large enough simple unitary groups of odd dimension cannot
be decomposed into the product of two proper subgroups).

We derive Theorems~\ref{t: gps} and \ref{t: 2} as consequences of the recent
Product theorem for finite simple groups, proved independently by Breuillard,
Green and Tao \cite{bgt2}, and Pyber and Szab\'o \cite{ps2} (see Section~\ref{s: proof2}). Theorem~\ref{t: normal subsets} follows from a version of Conjecture~\ref{c: 2} for normal subsets due to Liebeck and Shalev \cite{lieshal} and an extension of Pl\"unnecke's theorem \cite[Theorem 6.27]{taovu} to normal subsets of nonabelian groups (see Section~\ref{s: doubling}).

In the final section we use a result of Petridis \cite{petridis} to derive an analogue of the classical Doubling lemma, a special case of Pl\"unnecke's theorem. We refer to the new result as the Skew doubling lemma; it can be thought of as a nonabelian version of the classical Doubling lemma. The Skew doubling lemma is applied to prove that Conjecture~\ref{c: 2} implies Conjecture~\ref{c: 3}. In the other direction, a standard argument (similar to the proof of Corollary~\ref{c: polynomial}) shows that Conjecture~\ref{c: 3} implies that a simple group $G$ is a product of $\left(\log|G|/\log|S|\right)^c$ conjugates of $S$, a weaker version of Conjecture~\ref{c: 2}.

\section{Proof of Theorem~\ref{t: 2}}\label{s: proof2}

We begin with a result of Petridis \cite[Theorem 4.4]{petridis}, which extends work of Helfgott, Ruzsa and Tao \cite{helfgott3,Ru2009,Ru2010,tao}. It relates to the Doubling lemma for abelian groups, which we return to in Section~\ref{s: doubling}.

\begin{lem}\label{l: petridis}
Let $S$ be a finite subset of a group $G$. Suppose that there exist positive numbers $J$ and $K$ such that $|S^2|\leq J|S|$ and $|SgS|\leq K|S|$ for each $g$ in $S$. Then $|S^3| \leq J^7K|S|$.
\end{lem}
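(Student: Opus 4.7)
\medskip
\noindent\emph{Proof plan.} The plan is to find a subset $T\subseteq S$ of size at most $J^{7}$ with the property that $STS\supseteq S^{3}$; once this is done, the hypothesis $|SgS|\leq K|S|$ gives the union bound
\[
|S^{3}|\;\leq\;\sum_{g\in T}|SgS|\;\leq\;|T|\cdot K|S|\;\leq\;J^{7}K|S|,
\]
which is the desired conclusion. Thus the heart of the matter is producing a small covering set $T$, and the role of the second hypothesis is only to convert that covering into a size bound.

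To construct $T$ I would follow Petridis's strategy of selecting a minimizing subset. Set $K_{0}:=\min_{\emptyset\ne Y\subseteq S}|YS|/|Y|$; taking $Y=S$ gives $K_{0}\leq J$, and fix a minimizer $X\subseteq S$. A Petridis-style cascading argument, carried out in the nonabelian setting as in \cite{petridis}, should then deliver chain inequalities of the form $|XS^{k}|\leq K_{0}^{k}|X|$ for small $k$, together with transfers to related product sets (such as $|S^{-1}S|$, $|SS^{-1}|$, and $|S^{-1}S^{2}|$) via the nonabelian Ruzsa triangle inequality $|A|\cdot|BC|\leq|BA^{-1}|\cdot|AC|$. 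Feeding these bounds into Ruzsa's covering lemma then extracts the desired subset $T\subseteq S$ with $|T|\leq J^{7}$ and $STS\supseteq S^{3}$.

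The main obstacle is this last covering step. In the nonabelian world the Plünnecke-Ruzsa machinery is substantially weaker than in the abelian one: Petridis's key lemma does not propagate losslessly across product sets, and each use of a Ruzsa-type inequality (triangle or covering) typically costs an additional factor of $J$ over what one would expect classically. The exponent $7$ is simply the accumulated cost of the several applications needed to bootstrap from $|S^{2}|\leq J|S|$ to a covering statement about $S^{3}$. The hypothesis $|SgS|\leq K|S|$ is indispensable at this stage; without it $|S^{3}|$ need not be bounded by any polynomial in $J$ times $|S|$ in a general nonabelian group, as is shown for example by taking $S$ to be a subgroup together with a single generic extra element.
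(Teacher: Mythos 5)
First, note that the paper itself does not prove this lemma: it is quoted from Petridis \cite[Theorem 4.4]{petridis}, so the benchmark here is Petridis's argument. Your plan identifies the right ingredients (a Petridis minimiser, Ruzsa's triangle inequality, Ruzsa's covering lemma, and the observation that the $|SgS|$ hypothesis is what blocks the subgroup-plus-one-element example), but the central reduction is not established and is in fact a materially stronger claim than the known proof delivers. You want $T\subseteq S$ with $|T|\le J^{7}$ and $S^{3}\subseteq STS$, so that the hypothesis can be applied once per element of $T$. Ruzsa's covering lemma does not produce such a $T$: it covers a set $B$ satisfying $|XB|\le K_0|X|$ by at most $K_0$ right-translates of $X^{-1}X$, whereas the sets $SgS$ for $g\in S$ are not translates of a common set, so no greedy disjointness argument forces $S^{3}$ into a union of few of them with the covering elements sitting inside the middle copy of $S$. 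Nothing in your sketch bridges this, and the entire difficulty of the lemma is concentrated exactly here.

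What the argument actually does is cover the \emph{middle factor} $S$, not $S^{3}$, and then pay a Ruzsa-triangle cost to re-insert the covering set. Let $X\subseteq S$ minimise $|ZS|/|Z|$ over nonempty $Z\subseteq S$, so $|XS|=K_0|X|$ with $K_0\le J$. Lemma~\ref{l: petridis 2} with $C=S$ gives $|SXS|\le K_0|SX|\le J^{2}|S|$; two applications of Lemma~\ref{l: ruzsa} then give $|SXS^{-1}|\le |SXS|\,|S^2|/|S|\le J^{3}|S|$ and $|SX^{-1}XS^{-1}|\le |SXS^{-1}|^{2}/|S|\le J^{6}|S|$. Ruzsa's covering lemma applied to $|XS|\le J|X|$ yields $T\subseteq S$ with $|T|\le J$ and $S\subseteq X^{-1}XT$, whence $S^{3}\subseteq\bigcup_{t\in T}SX^{-1}XtS$. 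One further application of Lemma~\ref{l: ruzsa} gives $|SX^{-1}XtS|\le |SX^{-1}XS^{-1}|\,|StS|/|S|\le J^{6}K|S|$ for each $t\in T$, and the union bound gives $|S^{3}|\le J\cdot J^{6}K|S|=J^{7}K|S|$. So the exponent decomposes as $J$ from the covering, $J^{6}$ from comparing $SX^{-1}XtS$ with $StS$, and a single $K$ from the hypothesis --- not as $|T|\le J^{7}$ with the hypothesis applied directly to sets $StS$ covering $S^3$. You should rework your plan along these lines; as written, the covering statement you rely on is unproven and quite possibly false.
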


Suppose now that $G$ is a finite group, and let $\minclass(G)$ denote the size of the smallest nontrivial conjugacy class in $G$. Let $\minclass(S,G)$ denote the size of the smallest nontrivial conjugacy class in $G$ that intersects $S$, and let $\mindeg(G)$ denote the dimension of the smallest nontrivial complex irreducible representation of $G$.

As observed in \cite{npy}, a result of Gowers \cite{gowers} implies the following.

\begin{prop}\label{p: gowers}
Let $G$ be a finite group and let $k=\mindeg(G)$. Take $S\subseteq G$ such that
$|S| \geq \frac{|G|}{\sqrt[3]k}.$ Then $G=S^3$.
\end{prop}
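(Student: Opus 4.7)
The plan is essentially a one-line deduction from a theorem of Gowers \cite{gowers}. As recorded in \cite{npy}, Gowers' theorem states that whenever $A$, $B$, $C$ are subsets of a finite group $G$ with $|A|\,|B|\,|C| \geq |G|^3/\mindeg(G)$, one has $ABC = G$. I would simply apply this with $A = B = C = S$: the hypothesis $|S| \geq |G|/\sqrt[3]{k}$ rearranges to $|S|^3 \geq |G|^3/k$, which is exactly the quasirandomness bound that Gowers' theorem requires, and so $S^3 = G$.

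For completeness, the proof of Gowers' theorem itself goes via non-abelian Fourier analysis on $G$. Fix $g \in G$ and express the number of triples $(a,b,c) \in A \times B \times C$ with $abc = g$ using the Peter--Weyl decomposition of $L^2(G)$ into matrix coefficients of irreducible representations of $G$. The trivial representation contributes the main term $|A|\,|B|\,|C|/|G|$, while the sum over nontrivial irreducibles contributes an error term that, by Cauchy--Schwarz and the Plancherel identity, is controlled by a quantity of order $\sqrt{|A|\,|B|\,|C|\cdot|G|/k}$. The crucial point is that every nontrivial irreducible $\rho$ satisfies $\dim\rho \geq k$, and this lower bound is exactly what lets one pull a factor of $\sqrt{k}$ out of the error estimate. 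Under the stated hypothesis the main term dominates the error, forcing the count to be strictly positive, and hence $g \in ABC$; since $g$ was arbitrary, $ABC = G$.

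The main obstacle, were one proving Gowers' theorem from scratch, would be the non-abelian Plancherel bookkeeping: because the irreducible representations of $G$ are matrix-valued rather than scalar-valued, one must keep careful track of the factors of $\dim\rho$ in order to extract the correct power of $k$ in the error bound. For the proposition itself, however, there is no real obstacle once Gowers' theorem is in hand, since the required numerical substitution $|S|^3 \geq |G|^3/k$ is immediate from the hypothesis.
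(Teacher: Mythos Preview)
Your proposal is correct and matches the paper's treatment: the paper does not prove this proposition at all but simply cites it as a direct consequence of Gowers' theorem (via \cite{npy}), and your substitution $A=B=C=S$ is precisely the intended deduction. Your sketch of the underlying Fourier-analytic argument is a bonus beyond what the paper provides.
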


Now let $G=G_r(q)$ be a simple group of Lie type of rank $r$ over $\Fq$, the finite field of order $q$. We need some facts about $G$. The first result can be deduced, for example, from \cite[Tables 5.1 and Theorem 5.2.2]{kl}.

\begin{prop}\label{p: conjugacy size}
We have $q^r\leq \minclass(G)<|G|\leq q^{8r^2}$.
\end{prop}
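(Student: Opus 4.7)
The proposition consists of three assertions: $q^r\leq\minclass(G)$, $\minclass(G)<|G|$, and $|G|\leq q^{8r^2}$. The middle one is free (the smallest nontrivial conjugacy class is a proper, nonempty subset of $G$), so the plan has two substantive halves.

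For the upper bound $|G|\leq q^{8r^2}$, I would use the standard order formula writing $G$ as the fixed points of a Steinberg endomorphism on a simple algebraic group $\mathbf{G}$. That formula expresses $|G|$ as a polynomial in $q$ of degree equal to $\dim\mathbf{G}$, with leading term $q^{\dim\mathbf{G}}$ (and other factors bounded above by $q^{\dim\mathbf{G}}$ since they are products of $q^{d_i}\pm 1$ divided by a small integer). A case check on the Dynkin diagram shows that $\dim\mathbf{G}\leq 2r^2+r$ for the classical types $A_r,B_r,C_r,D_r$ (and their twisted analogues), while for the exceptional types $G_2,F_4,E_6,E_7,E_8$ the dimension is a fixed constant at most $248$. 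In every case this is far below $8r^2$, so $|G|\leq q^{\dim\mathbf{G}}\leq q^{8r^2}$ with substantial slack.

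For the lower bound $q^r\leq\minclass(G)$, the idea is to bound $|C_G(x)|$ from above for every non-identity $x\in G$. Using the Jordan decomposition $x=su=us$ with $s$ semisimple and $u$ unipotent, I would observe that $C_{\mathbf{G}}(x)\subseteq C_{\mathbf{G}}(s)\cap C_{\mathbf{G}}(u)$, and handle the two possibilities separately. If $s\neq 1$ then $C_{\mathbf{G}}(s)$ is a proper reductive subgroup containing a maximal torus, so its dimension is at most $\dim\mathbf{G}-2$, and in fact the tables give $\dim\mathbf{G}-\dim C_{\mathbf{G}}(s)\geq 2r$ in the worst case, yielding a class of size at least $c(r)^{-1}q^{2r}\geq q^r$. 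If $s=1$, so $x$ is a nontrivial unipotent element, then $x$ lies in some nilpotent orbit of $\mathbf{G}$, and the minimal such orbit has dimension at least $2r$ (for type $A_r$ this is a transvection, in general it is the minimal nilpotent orbit, whose dimension is $2h^\vee-2\geq 2r$). Either way the $G$-class of $x$ has size at least (roughly) $q^{2r}$ up to a bounded constant, which beats $q^r$ once $q$ is not too small; the finitely many small $q$ cases are handled by direct inspection.

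The main technical obstacle is handling the unipotent case and the various twisted Steinberg forms uniformly, since minimal centralizer dimensions and their $\Fq$-point counts do depend on type. This is precisely the kind of bookkeeping carried out in Tables 5.1 and Theorem 5.2.2 of \cite{kl}, so rather than reproducing the case analysis I would simply cite those tables to extract $q^r\leq\minclass(G)$ directly, noting that the bound $q^r$ is far from tight and so leaves comfortable room for the constants swept under the rug above.
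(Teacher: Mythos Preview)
Your proposal is correct and lands on essentially the same argument as the paper: the paper's entire proof is a one-line reference to \cite[Tables~5.1 and Theorem~5.2.2]{kl}, and you end up citing exactly the same tables for the lower bound while supplying a standard dimension count for the upper bound. The only caveat is that your intermediate claim $|G|\le q^{\dim\mathbf G}$ is not literally immediate for twisted types with $q^{d_i}+1$ factors (and the Suzuki--Ree groups need their own convention for $r$ and $q$), but the slack between $\dim\mathbf G$ and $8r^2$ easily absorbs this, so nothing is at risk.
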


\begin{prop}\label{p: landseitz}
Let $k=\mindeg(G)$. Then $|G|<k^{8r^2}$.
\end{prop}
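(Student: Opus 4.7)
The plan is to invoke the classical Landazuri--Seitz--Zalesski theorem, which is precisely what the label \texttt{p: landseitz} hints at. That theorem provides, for every finite simple group $G = G_r(q)$ of Lie type, an explicit lower bound of the shape $\mindeg(G) \geq C \cdot q^{\ell(r)}$ where $\ell(r)$ grows at least linearly in $r$ and the constant $C$ (typically $1/2$ or similar) is harmless for all $q$ with which we are concerned. Combined with the upper bound $|G| \leq q^{8r^2}$ from Proposition~\ref{p: conjugacy size}, the desired inequality $|G| < k^{8r^2}$ should reduce to a uniform numerical estimate.

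Concretely, I would argue as follows. From Landazuri--Seitz one has $k = \mindeg(G) \geq \tfrac{1}{2} q^{r}$ in the generic case (and in fact much stronger bounds once the rank grows). Raising both sides to the $8r^2$ power gives
\[
k^{8r^2} \;\geq\; \frac{q^{8r^3}}{2^{8r^2}}.
\]
We want this quantity to exceed $|G| \leq q^{8r^2}$, which amounts to verifying $q^{8r^2(r-1)} > 2^{8r^2}$. For $r \geq 2$ this is immediate since $q \geq 2$ and the exponent on the left is at least $8r^2$. Thus only the rank-one case $r = 1$ (essentially $PSL_2(q)$) requires separate care.

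For $PSL_2(q)$ we have $|G| \leq q^3$ and, by Landazuri--Seitz, $k \geq (q-1)/2$, so the target inequality becomes $q^3 < \bigl((q-1)/2\bigr)^{8}$, which holds for all $q \geq 4$; since $PSL_2(2)$ and $PSL_2(3)$ are not simple, this exhausts the cases. A handful of very small exceptional groups (for example the small Suzuki and Ree groups, and the smallest unitary groups) where the generic Landazuri--Seitz bound is slightly weaker can be verified either by hand using Atlas data or by noting that the order bound $q^{8r^2}$ is extremely crude and leaves a great deal of slack.

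The main obstacle is bookkeeping across all of the families of Lie type, since Landazuri--Seitz gives case-dependent exponents $\ell(r)$. The cleanest route is probably to quote a uniform formulation of the theorem (as may be found, for example, in the tables of Kleidman--Liebeck), and then invoke the crude order estimate $|G| \leq q^{8r^2}$ to absorb all constants and any slightly nonstandard small cases, leaving only the trivial one-line estimate in $q$ and $r$ described above.
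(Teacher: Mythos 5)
Your proposal follows essentially the same route as the paper: quote the Landazuri--Seitz lower bounds (with the Kleidman--Liebeck corrections), combine them with the crude estimate $|G|\leq q^{8r^2}$ from Proposition~\ref{p: conjugacy size}, and treat $PSL_2(q)$ separately. The paper streamlines the generic case by using only $k\geq q$ for $G\neq PSL_2(q)$, which gives $k^{8r^2}\geq q^{8r^2}\geq |G|$ at once and avoids your factor-of-$2$ bookkeeping. One small numerical slip: your claim that $q^3<\bigl((q-1)/2\bigr)^{8}$ holds for all $q\geq 4$ fails at $q=4$ (where $(3/2)^8\approx 25.6<64$); the conclusion survives because $|PSL_2(4)|=60$ and the true minimal projective degree there is $2$, so $q=4$ (and $q=9$) should be checked directly, as the paper does.
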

\begin{proof}
We use the lower bounds on projective representations given by Landazuri and
Seitz \cite{landseitz}, allowing for the slight errors corrected in
\cite[Table 5.3.A]{kl}. For $G\neq PSL_2(q)$, we see that $k\geq q$, and so
the result follows from Proposition~\ref{p: conjugacy size}.

Now suppose that $G=PSL_2(q)$; then $|G|<q^3$ and $r=1$. For $q\geq 5$ and $q\neq 9$, $k=\frac1{(2,q-1)}(q-1)$ and it is clear that $k^8>q^3$. When $q=4$ we have $k=2$ and the result follows; likewise when $q=9$ we have $k=3$ and the result follows.
\end{proof}

The next result was obtained independently in
\cite{guralnick-kantor} and \cite{stein}.

\begin{prop}\label{p: stein}
Each finite simple group $G$ is $\frac32$-generated; that is, for any nontrivial element $g$ of $G$ there exists $h$ in $G$ such that $\langle g, h\rangle=G$.
\end{prop}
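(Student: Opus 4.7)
My plan is to invoke the Classification of Finite Simple Groups and proceed family by family, as no CFSG-free proof is known. The common strategy in each case is probabilistic: given a nontrivial $g\in G$, show that a generic $h\in G$ together with $g$ generates $G$, by controlling the $h$ for which $\langle g,h\rangle$ lies inside some maximal subgroup.

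Concretely, writing $\mathcal{M}(g)$ for the set of maximal subgroups of $G$ containing $g$, the number of bad $h$ is at most $\sum_{M\in\mathcal{M}(g)}|M|$, so it suffices to prove
\[
P(g):=\sum_{M\in\mathcal{M}(g)}\frac{|M|}{|G|}<1.
\]
Because maximal subgroups of a nonabelian simple group are self-normalising, a standard double-counting rewrites this as $P(g)=\sum_{i=1}^{t}|g^G\cap M_i|/|g^G|$, where $M_1,\dots,M_t$ represent the $G$-conjugacy classes of maximal subgroups; each summand is a fixed-point ratio, so the whole problem reduces to bounding fixed-point ratios uniformly in $g$.

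For alternating and symmetric groups I would argue by direct cycle-type analysis, constructing for each nontrivial $g$ an explicit $h$ (typically built from an $n$- or $(n-1)$-cycle, adjusted to the support and parity of $g$) so that $\langle g,h\rangle$ is primitive and contains a short cycle, hence equals $A_n$ or $S_n$ by Jordan's theorem. Sporadic groups are dispatched by a finite case check using the ATLAS. The main obstacle is the Lie-type case, which requires the classification of maximal subgroups (Aschbacher's theorem for classical groups, Liebeck--Seitz for exceptional groups) together with sharp fixed-point-ratio estimates of the form $\mathrm{fpr}(g,G/M_i)\ll|g^G|^{-s}$ for some absolute $s>0$. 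For bounded rank only a bounded number of classes of maximal subgroups occur and a single nontrivial fpr bound suffices, but for large rank the number of classes grows polynomially in the rank and the fpr bounds must decay fast enough in $|g^G|$ (which grows at least like $q^r$, cf.\ Proposition~\ref{p: conjugacy size}) to beat this growth uniformly over all nontrivial $g$---the main technical content of the Guralnick--Kantor and Stein papers.
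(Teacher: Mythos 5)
The paper offers no proof of this proposition at all---it is simply quoted from \cite{guralnick-kantor} and \cite{stein}---and your outline accurately reproduces the strategy of those sources: reduce $\frac32$-generation to the estimate $\sum_{M}\mathrm{fpr}(g,G/M)<1$ over conjugacy classes of maximal subgroups via CFSG, Aschbacher and Liebeck--Seitz, with explicit constructions for alternating groups and an ATLAS check for sporadic groups. So you are taking essentially the same route the paper relies on, the only caveat being that your sketch, like the paper itself, defers the decisive fixed-point-ratio bounds (and the delicate cases, e.g.\ small $q$, where a uniformly random $h$ does not suffice and one must choose $h$ from a carefully selected class) to the cited literature rather than proving them.
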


\begin{cor}\label{c: 32}
Let $G$ be a finite simple group and let $S$ be a subset of $G$ of size at least two. Then some translate of $S$ generates $G$.
\end{cor}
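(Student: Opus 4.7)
The plan is to reduce this to a direct application of the $3/2$-generation property, Proposition~\ref{p: stein}. The key observation is that any translate $gS$ contains all differences $(gs)^{-1}(gs') = s^{-1}s'$ as products of two of its elements, so the subgroup $\langle gS\rangle$ automatically captures the ``internal'' ratios of $S$. We can therefore exploit one such nontrivial ratio as the first of our $3/2$-generating pair, and then use the freedom in choosing $g$ to plant any desired second generator into $gS$.

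More concretely, since $|S|\ge 2$ we first pick two distinct elements $s_1,s_2\in S$ and set $a:=s_1^{-1}s_2$, which is nontrivial. Proposition~\ref{p: stein} then supplies some $h\in G$ with $\langle a,h\rangle=G$. Now I would choose the translating element $g:=hs_1^{-1}$. With this choice, $gs_1=h$ and $gs_2=ha$ both lie in the left translate $gS$, so
\[
\langle gS\rangle\;\supseteq\;\langle h,\,ha\rangle\;=\;\langle h,a\rangle\;=\;G,
\]
giving the required translate.

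There is no substantive obstacle here: once one notices that multiplying $S$ on the left by a single element is exactly enough freedom to prescribe the image of any chosen $s_1\in S$, the argument collapses to invoking $3/2$-generation for the nontrivial element $s_1^{-1}s_2$. The only point worth being slightly careful about is the distinction between left and right translates, but either choice works by the same argument (using $Sh$ and the ratio $s_2 s_1^{-1}$ instead).
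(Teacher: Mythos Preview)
Your proof is correct and follows essentially the same route as the paper: pick two distinct elements of $S$, apply $3/2$-generation to their ratio, and translate $S$ so as to plant the second generator inside. The only cosmetic difference is that you use a left translate $gS$ with the ratio $s_1^{-1}s_2$, whereas the paper uses a right translate $Su^{-1}x$ with the ratio $vu^{-1}$; as you yourself note, either side works.
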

\begin{proof}
Let $u$ and $v$ be distinct elements of $S$. Since $G$ is $\frac32$-generated, there
exists $x$ in $G$ such that $\langle vu^{-1}, x\rangle=G$. Therefore the translate
$Su^{-1}x$, which contains $x$ and $vu^{-1}x$, generates $G$.
\end{proof}

The next result, the Product theorem, is our primary tool for proving
Theorems~\ref{t: gps} and \ref{t: 2}. Versions of this result can be found in \cite{bgt2,ps2}.
It was first proved by Helfgott for the groups $PSL_2(p)$ and $PSL_3(p)$ in \cite{helfgott2, helfgott3}.

\begin{thm}\label{t: generating sets}
Fix a positive integer $r$. There exists a positive constant $\eta=\eta(r)$ such
that, for $G$ a finite simple group of Lie type of rank $r$ and $S$ a
generating set of $G$,  either $S^3=G$ or $|S^3|\geq |S|^{1+\eta}.$
\end{thm}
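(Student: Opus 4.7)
The plan is to view $G=G_r(q)$ as (essentially) the $\mathbb{F}_q$-points of a simple algebraic group $\mathbb{G}$ of bounded dimension $d=d(r)$ defined over $\overline{\mathbb{F}_q}$, and to run an induction on $\dim\mathbb{G}$. The starting point is the classical \emph{escape from subvarieties} principle: since $S$ generates $G$, for any proper closed subvariety $V\subset\mathbb{G}$ whose degree is bounded by a function of $r$, there is a $k=k(r)$ with $S^k\not\subseteq V$; a quantitative form of this produces a positive proportion of some bounded power $S^k$ off $V$.

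Assume for contradiction that $|S^3|<|S|^{1+\eta}$ for a sufficiently small $\eta=\eta(r)>0$. The nonabelian Pl\"unnecke--Ruzsa inequalities then yield $|S^m|\leq|S|^{1+O_m(\eta)}$ for every fixed $m$. The central tool is a \emph{Larsen--Pink-type inequality}, which bounds $|A\cap\mathbb{H}(\overline{\mathbb{F}_q})|$ for a finite set $A$ with controlled tripling and any proper closed subgroup $\mathbb{H}\leq\mathbb{G}$ of bounded degree. Combined with escape from subvarieties, this gives the key dichotomy: either a bounded power of $S$ is essentially contained in a proper algebraic subgroup $\mathbb{H}$, in which case one passes to $\mathbb{H}\cap G$ and closes by induction on $\dim\mathbb{G}$, or else $S$ genuinely spreads across $\mathbb{G}$.

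In the latter case I would run a \emph{pivoting} argument. Escape from subvarieties produces a regular semisimple element $t$ in some bounded power of $S$, whose centraliser is a unique maximal torus $T$. Examining how the conjugates $s^{-1}ts$ with $s\in S$ act on the Lie algebra of $\mathbb{G}$ via the root space decomposition shows that these conjugates point in many distinct directions, producing $|S|^{1+\Omega(1)}$ pairwise distinct elements in some bounded power $S^K$. This contradicts $|S^K|\leq|S|^{1+O(\eta)}$ unless $|S|$ is already comparable to $|G|$, at which point Proposition~\ref{p: gowers} directly delivers $S^3=G$.

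The hard part is the uniform degree bookkeeping: one must ensure that every variety produced along the way by intersections, conjugations and products has degree bounded purely in terms of $r$, uniformly in the field size $q$, so that both escape from subvarieties and the Larsen--Pink inequality apply with constants independent of $q$. This algebraic-geometric input is precisely what forces $\eta$ to depend on $r$ rather than being absolute, and it is where the induction on $\dim\mathbb{G}$ has to be combined most carefully with the combinatorial doubling estimates.
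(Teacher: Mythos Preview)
The paper does not actually prove Theorem~\ref{t: generating sets}; it is quoted as a black box, with the sentence ``Versions of this result can be found in \cite{bgt2,ps2}. It was first proved by Helfgott for the groups $PSL_2(p)$ and $PSL_3(p)$ in \cite{helfgott2, helfgott3}.'' So there is nothing in the paper to compare your argument against: the Product theorem is an external input here, not something the authors establish.

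That said, your sketch is a fair high-level caricature of the Pyber--Szab\'o / Breuillard--Green--Tao approach: small tripling plus nonabelian Pl\"unnecke--Ruzsa to control all bounded powers, a Larsen--Pink-type concentration estimate for intersections with bounded-complexity subvarieties, escape from subvarieties to find regular semisimple elements in a bounded power of $S$, and then a torus/pivoting step to force growth. Two cautions are worth flagging. First, the ``induction on $\dim\mathbb{G}$'' branch as you phrase it is misleading: in a finite \emph{simple} group $G$ a generating set $S$ cannot sit inside a proper subgroup $\mathbb{H}\cap G$, so the dichotomy is not resolved by passing to a smaller ambient group but rather by exploiting the structure of approximate subgroups relative to $\mathbb{H}$ (this is where the Larsen--Pink inequality does real work, bounding $|S^k\cap\mathbb{H}|$ by $|S^{3k}|^{\dim\mathbb{H}/\dim\mathbb{G}+o(1)}$). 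Second, the endgame invocation of Proposition~\ref{p: gowers} is not how those papers close the argument; the growth $|S^3|\geq|S|^{1+\eta}$ is obtained directly from the torus/pivot count once $S$ is genuinely spread out, without needing $|S|$ comparable to $|G|$. If you intend to write up a self-contained proof rather than cite \cite{bgt2,ps2}, these are the places that need sharpening.
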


We can now prove Theorem~\ref{t: 2}.

\begin{proof}[Proof of Theorem~\ref{t: 2}]
Given a positive integer $r$, let $\eta$ be the constant from Theorem~\ref{t: generating sets}. It suffices to prove Theorem~\ref{t: 2} for sets $S$ of size larger than some constant $L>1$ that depends only on $\eta$, because if $|S|<L$, and $S^3\neq G$, then, by the simplicity of $G$, there is an element $g$ of $G$ such that  $|SS^g|\geq |S|+1$, and $|S|+1\geq |S|^{1+\delta}$, where $\delta=\log(L+1)/\log L - 1$. In particular, we assume that $|S|\geq 8^{\frac{2}{\eta}}$, and we define $\varepsilon = \tfrac{1}{16}\min\left\{\eta,\tfrac{1}{24r^2}\right\}$.

Since $G$ is $\frac32$-generated, there exists an element $g$ of $G$ such that the set $T=S\cup \{g\}$ generates $G$. We can apply Theorem~\ref{t: generating sets} to  $T$ to conclude that either $T^3=G$ or $|T^3|\geq |S|^{1+\eta}$.

Now,  $T^3$ is the union of the eight sets $SSS$, $SSg$, $SgS$, $gSS$, $Sgg$, $gSg$, $ggS$ and $\{ggg\}$. Suppose that $|T^3|\geq |S|^{1+\eta}$. By the pigeon-hole principle at least one of the eight sets is larger than $\frac18 |S|^{1+\eta}$. We assumed earlier that $|S|\geq 8^{\frac{2}{\eta}}$, from which it follows that $\frac18 |S|^{1+\eta} > |S|^{1+\frac{\eta}{2}}$. Therefore one of the first seven of the eight sets is larger than $|S|^{1+\frac{\eta}{2}}$. All of these seven sets except $SSS$ are equal to a translate of the product of one or two conjugates of $S$, so if any of these have size at least $|S|^{1+\frac{\eta}{2}}$ then $|SS^h|\geq |S|^{1+\frac{\eta}{2}}$ for some element $h$ of $G$. If, on the other hand, $|SSS|\geq|S|^{1+\frac{\eta}{2}}$, then Lemma~\ref{l: petridis} (with $J=K=|S|^{\frac{\eta}{16}}$) implies that there is an element $h$ of $S\cup \{1\}$ with  $|SS^h|\geq |S|^{1+\frac{\eta}{16}}$. Thus in both cases there is an element $h$ with $|SS^h|\geq |S|^{1+\varepsilon}$.

The remaining possibility is that $T^3=G$. If $S^3\neq G$ then Proposition~\ref{p: gowers} implies that $|S|\leq |G|/\sqrt[3]{k}$ where $k=\mindeg(G)$. But Proposition~\ref{p: landseitz} gives that $|S|\leq |G|^{1-\frac{1}{24r^2}}$, and this implies, in particular, that $|T^3|=|G|\geq |S|^{1+\frac{1}{24r^2}}$. The argument of the previous paragraph applies again, to give $|SS^h|\geq |S|^{1+\varepsilon}$ for some element $h$.
\end{proof}

Note that we can immediately deduce the following result of \cite{lns}
(which we will use later).

\begin{cor}\label{c: polynomial}
Fix a positive integer $r$. There exists a constant $d$ such that if $G$ is a finite simple group of Lie type of rank $r$ and $S$ is a subset of $G$ of size at least two then $G$ is a product of $N$ conjugates of $S$ for some $N\leq 3 (\log|G|/ \log |S|)^d$.
\end{cor}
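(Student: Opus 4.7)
The plan is to iterate Theorem~\ref{t: 2} in a doubling fashion. I would build a sequence of subsets $S_0, S_1, S_2, \dots$ of $G$ by setting $S_0 = S$ and, at each stage, either stopping (if $S_k^3 = G$) or using Theorem~\ref{t: 2} to produce an element $g_k \in G$ with $|S_k S_k^{g_k}| \ge |S_k|^{1+\varepsilon}$, and then setting $S_{k+1} = S_k S_k^{g_k}$. Here $\varepsilon = \varepsilon(r) > 0$ is the constant supplied by Theorem~\ref{t: 2}.

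An easy induction on $k$ then shows that $S_k$ is a product of exactly $2^k$ conjugates of $S$, and that $|S_k| \ge |S|^{(1+\varepsilon)^k}$ as long as the iteration has not terminated. Since $|S_k| \le |G|$ trivially, the exponent $(1+\varepsilon)^k$ cannot grow beyond $\log|G|/\log|S|$, so the iteration must halt at some index $k_0$ with $S_{k_0}^3 = G$. Taking logarithms in $(1+\varepsilon)^{k_0}\log|S| \le \log|G|$ gives
\[
k_0 \le \frac{\log\!\bigl(\log|G|/\log|S|\bigr)}{\log(1+\varepsilon)},
\]
so $G$ is a product of $N = 3 \cdot 2^{k_0}$ conjugates of $S$ with
\[
2^{k_0} \le \left(\frac{\log|G|}{\log|S|}\right)^{\!d}, \qquad d = \frac{\log 2}{\log(1+\varepsilon)},
\]
which is exactly the claimed bound.

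There is essentially no obstacle here: all the substance is packed into Theorem~\ref{t: 2}, whose conclusion supplies precisely the doubling-type step that drives the iteration, and the rest is routine geometric-growth bookkeeping. The only minor point to check is that the iteration is well-defined at every stage, i.e.\ that $|S_k|\ge 2$ so that Theorem~\ref{t: 2} may be applied, but this is immediate from the monotonicity $|S_k| \ge |S_0| = |S| \ge 2$.
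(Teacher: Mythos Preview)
Your proposal is correct and matches the paper's own proof essentially line for line: the paper sets $d=\log_{1+\varepsilon}2$, lets $M=\big\lfloor\log_{1+\varepsilon}\big(\log|G|/\log|S|\big)\big\rfloor$, and asserts that the iteration you describe yields $G$ as a product of $3\cdot 2^{M}$ conjugates of $S$. Your write-up simply makes explicit the inductive construction of the $S_k$ that the paper leaves implicit.
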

\begin{proof}
Let $\varepsilon$ be the constant from Theorem~\ref{t: 2}, and define $d=\log_{1+\varepsilon}2$. Let $M$ be the integer part of $\log_{1+\varepsilon} \frac{\log|G|}{\log|S|}$. Theorem~\ref{t: 2} implies that $G$ is the product of $3\cdot 2^M$ conjugates of $S$, and
\[
3\cdot 2^M \leq 3\left(\frac{\log|G|}{\log|S|}\right)^{d}.
\]
\end{proof}

The results in this section motivate a common generalisation of the Product
theorem, and Conjecture~\ref{c: 3}, for groups of Lie type.

\begin{conj}\label{c: 4}
There exists  $\varepsilon>0$ and a positive integer $b$ such that the following statement holds. For each integer $r$ there is a positive integer $c(r)$  such that if $G$ is a finite simple group of Lie type of rank $r$ and $S$ a generating set of $G$, then either $|SS^g|\ge|S|^{1+\varepsilon}$ for some $g\in S^{c(r)}$, or else $G$ is the product of $b$ conjugates $S^{g_1},\dots,S^{g_b}$, where $g_1,\dots,g_b\in S^{c(r)}$.
\end{conj}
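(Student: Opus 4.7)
The plan is to run the argument of Theorem~\ref{t: 2} with the auxiliary generator there replaced by elements of $S$ itself, which is possible because $S$ is assumed to generate $G$. I assume without loss of generality that $1\in S$, so $S\subseteq S^2\subseteq\cdots$ and the trivial conjugator lies in $S^{c(r)}$ for any $c(r)\ge1$; and I assume $S$ is symmetric ($S=S^{-1}$). Both the non-symmetric case and the absoluteness of $\varepsilon$ are the main obstructions, discussed at the end.

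After the standard reduction to $|S|\ge L(r)$ (otherwise simplicity of $G$ gives $|SS^g|\ge|S|+1\ge|S|^{1+\varepsilon}$ with $g=1$), apply Theorem~\ref{t: generating sets} to $S$. If $S^3=G$, then $G=S^{1}S^{1}S^{1}$ exhibits $G$ as a product of three conjugates of $S$ by $1\in S^{c(r)}$, so the second alternative of Conjecture~\ref{c: 4} holds with $b=3$. Otherwise $|S^3|\ge|S|^{1+\eta}$ with $\eta=\eta(r)$, and Lemma~\ref{l: petridis} applied with $J=K=|S|^{\eta/16}$ forces either $|S^2|\ge|S|^{1+\eta/16}$ or $|SgS|\ge|S|^{1+\eta/16}$ for some $g\in S$. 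The first yields the growth alternative with $g=1$; in the second, the bijection $s_1gs_2\mapsto s_1gs_2g^{-1}$ gives $|SgS|=|SS^{g^{-1}}|$, and symmetry of $S$ places $g^{-1}\in S\subseteq S^{c(r)}$, again delivering the growth alternative. This proves the conjecture for symmetric $S$ with $b=3$, $c(r)=1$, but with growth exponent $\varepsilon=\eta(r)/16$ depending on $r$ rather than being absolute.

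Two obstacles remain before obtaining the full statement of Conjecture~\ref{c: 4}. First, if $S\neq S^{-1}$, the conjugator $g^{-1}$ produced by Petridis need not lie in any bounded power of $S$: in a finite group one only has $g^{-1}=g^{\mathrm{ord}(g)-1}$, and orders of elements in finite simple groups of Lie type of rank $r$ are not bounded in terms of $r$ alone, while symmetrising $S$ to $T=S\cup S^{-1}\cup\{1\}$ proves Conjecture~\ref{c: 4} for $T$ but loses the quantitative link between conjugators in $T^{c(r)}$ and in $S^{c(r)}$. Second, making $\varepsilon$ absolute is out of reach with the Product theorem alone, which only provides $\eta=\eta(r)$: iterations in the style of Corollary~\ref{c: polynomial} do not amplify $\varepsilon$, and can only reduce the number of factors at the cost of enlarging $c(r)$ without bound. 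Overcoming either obstacle would seem to require a refinement of Theorem~\ref{t: generating sets} producing growth witnesses inside $S$ itself with constants uniform in $r$, which is not available at present.
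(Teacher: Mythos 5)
The statement you are trying to prove is not a theorem of the paper: Conjecture~\ref{c: 4} is posed there as an \emph{open problem}, offered as a common generalisation of the Product theorem and Conjecture~\ref{c: 3}, and the authors give no proof of it. So there is no ``paper proof'' to match; the only question is whether your argument closes the conjecture, and by your own (correct) admission it does not. What you establish is a rank-dependent statement: for symmetric generating sets containing the identity, either $S^3=G$ or $|SS^g|\ge|S|^{1+\eta(r)/16}$ for some $g\in S$. That is essentially a localisation of the proof of Theorem~\ref{t: 2} (with the auxiliary element $g$ of Corollary~\ref{c: 32} replaced by an element of $S$), and the mechanics are fine: if $|S^3|\ge|S|^{1+\eta}$ then Lemma~\ref{l: petridis} with $J=K=|S|^{\eta/16}$ cannot have its hypotheses satisfied, so either $|S^2|$ or some $|SgS|=|SS^{g^{-1}}|$, $g\in S$, exceeds $|S|^{1+\eta/16}$.

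The genuine gap is exactly where you locate it, but it is worth stressing that it is not a technical loose end --- it is the entire content of the conjecture. Conjecture~\ref{c: 4} demands that $\varepsilon$ and $b$ be \emph{absolute}, with only the ``search radius'' $c(r)$ allowed to depend on the rank; your $\varepsilon=\eta(r)/16$ inherits the rank dependence of the Product theorem, and no amount of iteration in the style of Corollary~\ref{c: polynomial} converts a rank-dependent growth exponent into an absolute one (such iterations trade the number of factors against the exponent, they do not amplify it uniformly in $r$). Two smaller points: assuming $1\in S$ is not without loss of generality here, since the conjecture is about the set $S$ itself and not a translate of it (the translation trick used in the paper for Theorem~\ref{t: gps} changes which elements lie in $S^{c(r)}$); and when $S^3=G$ you should note that $G=SSS$ is literally a product of three conjugates of $S$ by the identity, which is in $S^{c(r)}$ only under your extra normalisation. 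In short: your write-up is a reasonable partial result and an honest account of the obstructions, but it is not a proof of Conjecture~\ref{c: 4}, which remains open in the paper and, as far as the paper's own tools go, out of reach.
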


It would be interesting to prove
Conjecture~\ref{c: 3} in the case when $S$ is a subgroup of $G$.
A rather general qualitative result in this direction was obtained by
Bergman and Lenstra \cite{bergman-lenstra}.
They show that if $H$ is a subgroup of a group $G$
satisfying $\big|H H^g\big|\leq K|H|$ for all $g$ in $G$,
then $H$ is ``close to'' some normal subgroup $N$ of $G$,
in the sense that $\big|H:H\cap N\big|$ and $\big|N:H\cap N\big|$
are both bounded in terms of $K$.

\section{Proof of Theorem~\ref{t: gps}}\label{s: proof1}

Given an element $g$ of a group $G$ we define
\[
g^G=\{g^h\,:\, h\in G\},
\]
and, for a subset $Z$ of $G$,
\[
Z^G = \{Z^h\,:\, h\in G\}.
\]
We begin the proof of Theorem~\ref{t: gps} with a simple combinatorial lemma, which enables us to deal with ``small'' sets.

\begin{lem}\label{l: get a big set}
Let $S$ be a subset of a finite group $G$.
There exist a positive integer $m$ and
$m$ conjugates of $S$ such that their product $X$ satisfies
$$|X|= |S|^m\geq \frac{\sqrt{\minclass(SS^{-1},G)}}{|S|}\geq
\frac{\sqrt{\minclass(G)}}{|S|}.$$
\end{lem}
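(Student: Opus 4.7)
The plan is to construct the product inductively, adjoining one conjugate of $S$ at a time while keeping the property that the product has maximal possible size (every factored expression is distinct), and halting as soon as the product is large enough. Starting from $X_1=S$, which clearly has $|X_1|=|S|=|S|^1$, I would show by induction on $m$ that as long as $|S|\cdot|X_m|<\sqrt{\minclass(SS^{-1},G)}$, there exists $g_{m+1}\in G$ such that $X_{m+1}:=X_m\cdot S^{g_{m+1}}$ has size exactly $|X_m|\cdot|S|=|S|^{m+1}$ and so is a product of $m+1$ conjugates of $S$ of full size.

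The inductive step is a pigeonhole argument, and I expect it to be the main technical step. A collision in $X_m\cdot S^{g_{m+1}}$ occurs precisely when $x_1 s_1^{g_{m+1}}=x_2 s_2^{g_{m+1}}$ for some $x_i\in X_m$ and $s_i\in S$ with $(x_1,s_1)\neq(x_2,s_2)$; rearranging gives $(s_1 s_2^{-1})^{g_{m+1}}=x_1^{-1}x_2$ with both sides nontrivial. Hence a \emph{bad} $g_{m+1}$ is one that conjugates some nontrivial $t\in SS^{-1}$ into $X_m^{-1}X_m\setminus\{1\}$. For each such $t$, the number of $g\in G$ realising $t^g=u$ for a fixed $u$ is $|C_G(t)|=|G|/|t^G|\leq|G|/\minclass(SS^{-1},G)$ or zero. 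Union-bounding over all nontrivial $t\in SS^{-1}$ and $u\in X_m^{-1}X_m$ yields at most
\[
|SS^{-1}|\cdot|X_m^{-1}X_m|\cdot\frac{|G|}{\minclass(SS^{-1},G)}\leq\frac{|S|^2|X_m|^2\,|G|}{\minclass(SS^{-1},G)}
\]
bad elements, a quantity strictly less than $|G|$ precisely when $|S|\cdot|X_m|<\sqrt{\minclass(SS^{-1},G)}$. In that range a good $g_{m+1}$ must exist, completing the induction.

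Since $|X_m|=|S|^m$ is bounded above by $|G|$, the process must terminate at some least $m$ for which the extension fails. For that $m$ one has $|S|\cdot|X_m|\geq\sqrt{\minclass(SS^{-1},G)}$, whence $|X|:=|X_m|\geq\sqrt{\minclass(SS^{-1},G)}/|S|$. Finally, the inequality $\minclass(SS^{-1},G)\geq\minclass(G)$ is trivial, since the former is the minimum of class sizes over a subcollection of the classes indexing the latter, giving the second inequality claimed in the lemma. The delicate aspect of the argument is balancing the two factors in the union bound: the $\sqrt{\phantom{x}}$ in the target bound arises exactly from the estimates $|SS^{-1}|\leq|S|^2$ and $|X_m^{-1}X_m|\leq|X_m|^2$ appearing symmetrically in the product above.
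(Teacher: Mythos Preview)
Your proof is correct and follows essentially the same approach as the paper: the same iterative construction $X_{m+1}=X_m S^{g_{m+1}}$, the same collision criterion $(s_1s_2^{-1})^{g}\in X_m^{-1}X_m\setminus\{1\}$, and at termination the same inequality $|S|^2|X|^2\ge\minclass(SS^{-1},G)$. The only cosmetic difference is that you reach this inequality by a direct union bound on ``bad'' conjugators, whereas the paper introduces a minimal set $T$ meeting every conjugate of $SS^{-1}$ and double counts incidences in $g^G\times(SS^{-1})^G$; the two counts are equivalent.
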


\begin{proof}
Define $X_1=S$ and, if possible, choose an element $g$ of $G$ such that
$X_1^{-1}X_1\cap gSS^{-1}g^{-1}=\{1\}$. Define $X_2=X_1gSg^{-1}$. Notice that
if $x_L, x_R\in X_1, s_L, s_R\in S$, and $x_Lgs_Lg^{-1}=x_Rgs_Rg^{-1}$, then
$x_R^{-1}x_L=gs_Rs_L^{-1}g^{-1}$.
Hence $x_R^{-1}x_L\in X_1^{-1}X_1\cap gSS^{-1}g^{-1}$,
and so $x_L=x_R$ and $s_L=s_R$. It follows that
$|X_2|=|X_1||S|$. Now repeat this process with $X_2$ replacing $X_1$, and so
on.

The process terminates with a set $X$ of size $|S|^m$, which is a product of $m$ conjugates of $S$, and such that $|X^{-1}X\cap gSS^{-1}g^{-1}|\geq 2$ for all $g$ in $G$.

Let $T$ be a set of smallest possible size that intersects every conjugate of
$Z=SS^{-1}$ nontrivially, and write $t=|T|$.
Let $n=|G:N_G(Z)|$, the number of $G$-conjugates of $Z$. By the pigeonhole principle there exists an element $g$ of $Z$ that lies in at least $\frac{n}{t}$ different conjugates of $Z$. Let us count the set
\[
\Omega=
\big\{(g',Z') \in g^G\times Z^G \, \big| \, g'\in Z'\big\}
\]
in two different ways.

First, since every conjugate of $g$ lies in the same number of conjugates of
$Z$, we know that $|g^G| \frac{n}{t}\leq |\Omega|.$ On the other hand it
is clear that $|\Omega|\leq n|Z|$. Putting these together we obtain that
$|g^G|\frac{n}{t}\leq n|Z|$. Therefore
$$t\geq \frac{|g^G|}{|Z|}\ge\frac{\minclass(SS^{-1},G)}{|S|^2}$$
and using $|X|^2\geq |X^{-1}X|\geq t$
our statement follows.
\end{proof}

\begin{remark}
Lemma \ref{l: get a big set} and Proposition~\ref{p: conjugacy size} imply that if $G$ is a simple group of Lie type of rank $r$ and $S$ a subset of size less that $q^{r/4}$ then we have $\big|SS^g\big|=|S|^2$ for some $g$ in $G$.
\end{remark}

We are now ready to prove Theorem~\ref{t: gps}.

\begin{proof}[Proof of Theorem~\ref{t: gps}]
As observed above, a product of conjugates of a translate of $S$ is equal to
the translate of a product of conjugates of $S$. By Corollary~\ref{c: 32},
a translate of $S$ generates $G$. Therefore we
assume that $S$ generates $G$.

Suppose that $|S| \geq \big|\minclass(G)\big|^{1/4}$; then
 $|G|<|S|^{32r}$ by Proposition~\ref{p: conjugacy size}.
Now Corollary~\ref{c: polynomial} implies that $G$ is a product of fewer than
$3(32r)^d$ conjugates of $S$.
The theorem holds in this case with $c=3(32r)^d$.

Suppose instead that $|S| < |\minclass(G)|^{1/4}$. By Lemma~\ref{l: get a big
  set} we can choose conjugates $S_1,\dots,S_m$ of $S$ such that the set
$X=S_1\dotsb S_m$ satisfies $|X|=|S|^m$ and
\[
|X|\geq \frac{\sqrt{|\minclass(G)|}}{|S|} \ge
\big|\minclass(G)\big|^{1/4} \,.
\]
It follows from the first part of the proof that $G$ is a product of fewer than $c\log|G|/\log|X|$ conjugates of $X$. Therefore $G$ is a product of fewer than $mc\log|G|/\log|X|$ conjugates of $S$ and, since $\log|X|=m\log|S|$, the result follows.
\end{proof}

\section{Pl\"unnecke-Ruzsa estimates for nonabelian groups}\label{s: doubling}

The following basic result in additive combinatorics is due to Pl\"unnecke \cite{plun, plun2} (see also \cite[Section~6.5]{taovu}).

\begin{thm}\label{t: doubling}
Let $A$ and $B$ be finite sets in an abelian group $G$ and suppose that $|A B|\leq K|A|$ where $K$ is a positive number. Then for any positive integer $m$ there exists a nonempty subset $X$ of $A$ such that
\[
|X B^m|\leq K^m|X|.
\]
In particular, $|B^2|\leq K|B|$ implies that $|B^m|\leq K^m|B|$ for $m=1,2,\dotsc$.
\end{thm}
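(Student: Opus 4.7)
My plan is to follow the approach of Petridis, which already underlies the cited Lemma~\ref{l: petridis}: pick $X$ to be a subset of $A$ minimising the ``expansion ratio'' $|X'B|/|X'|$, and then leverage this minimality. Formally, set
\[
K' = \min\bigl\{\, |X'B|/|X'| : \emptyset \ne X' \subseteq A\,\bigr\},
\]
and let $X \subseteq A$ be a nonempty set attaining this minimum. Since $A$ itself is a candidate, $K' \le |AB|/|A| \le K$, and the theorem will follow once we establish $|XB^m| \le (K')^m |X|$ for every $m \ge 1$.

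The engine of the proof is the following lemma, which I would prove by induction on $|C|$: for every finite set $C \subseteq G$,
\[
|XBC| \le K'\,|XC|.
\]
The base case $|C|=1$, say $C=\{c\}$, reduces to $|XBc|=|XB|=K'|X|=K'|Xc|$. For the inductive step, write $C = C_0 \cup \{\gamma\}$ with $\gamma \notin C_0$, and decompose
\[
XBC = XBC_0 \cup XB\gamma, \qquad XC = XC_0 \cup X\gamma.
\]
Introduce the ``shadow'' sets
\[
W = \{\, x \in X : x\gamma \in XC_0\,\}, \qquad F = \{\, y \in XB : y\gamma \in XBC_0\,\},
\]
so that $|XC| = |XC_0| + |X| - |W|$ and $|XBC| = |XBC_0| + |XB| - |F|$. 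The crucial observation is the inclusion $WB \subseteq F$: if $x \in W$, say $x\gamma = x'c_0$ with $x' \in X$ and $c_0 \in C_0$, then for any $b \in B$ commutativity yields $xb\gamma = (x\gamma)b = x'c_0 b = x'b c_0 \in XBC_0$, so $xb \in F$. Consequently, applying the defining minimality of $K'$ to $W \subseteq A$ gives $|F| \ge |WB| \ge K'|W|$, and combining with the inductive bound $|XBC_0| \le K'|XC_0|$ produces $|XBC| \le K'|XC|$.

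With the lemma in hand, the first assertion follows by iteration: applying the lemma with $C = B^{m-1}$ gives $|XB^m| \le K'|XB^{m-1}|$, hence inductively $|XB^m| \le (K')^m|X| \le K^m|X|$. The ``in particular'' clause is the case $A = B$: for any $x_0 \in X$, the translate $x_0 B^m$ lies inside $XB^m$, so $|B^m| = |x_0 B^m| \le |XB^m| \le K^m|X| \le K^m|B|$.

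The main obstacle, in my view, is pinpointing the right shadow sets $W$ and $F$ and verifying the inclusion $WB \subseteq F$: this is precisely the point where commutativity is used, and it is also the point at which the naive nonabelian analogue breaks down, motivating the care taken in Section~\ref{s: doubling} when extending such estimates to normal subsets of nonabelian groups.
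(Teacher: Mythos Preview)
Your proof is correct. Note, however, that the paper does not actually prove Theorem~\ref{t: doubling}: it is stated as a classical result of Pl\"unnecke, with references, and no argument is given. So there is no ``paper's own proof'' to compare against here.

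That said, your approach is precisely the one the paper adopts for the nonabelian extension. The key lemma you establish by induction on $|C|$, namely that $|XBC|\le K'|XC|$ whenever $X$ minimises the ratio $|X'B|/|X'|$, is exactly the abelian case of Lemma~\ref{l: petridis 2} (Petridis's lemma), which the paper quotes without proof and then uses to derive Theorem~\ref{t: normal doubling}. Indeed, the paper's proof of Theorem~\ref{t: normal doubling} is your iteration step verbatim, with normality replacing commutativity to justify $|XB^{m+1}|=|B^mXB|$. So while you have supplied a self-contained argument where the paper simply cites the literature, the method is identical in spirit: choose $X\subseteq A$ minimising the expansion ratio, invoke Petridis's inequality, and iterate. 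One minor remark: your inclusion $WB\subseteq F$ tacitly uses $W\subseteq X$ (so that $WB\subseteq XB$), and your application of minimality to $W$ should note that the inequality $|WB|\ge K'|W|$ holds trivially when $W=\emptyset$; both are harmless but worth making explicit.
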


The last statement (``In particular\ldots'') is called the Doubling lemma; it does not hold for nonabelian groups, however, as we saw in Lemma~\ref{l: petridis}, there are useful analogues in this context due to Helfgott, Petridis, Ruzsa and Tao \cite{helfgott3,petridis,Ru2009,Ru2010,tao}. Petridis also proved the following lemma \cite[Proposition 2.1]{petridis}.

\begin{lem}\label{l: petridis 2}
Let $X$ and $B$ be finite sets in a group. Suppose that
\[
\frac{|XB|}{|X|} \leq \frac{|ZB|}{|Z|}
\]
for all $Z\subseteq X$. Then, for all finite sets $C$,
\[
|CXB|\leq \frac{|CX| |XB|}{|X|}.
\]
\end{lem}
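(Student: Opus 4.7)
The plan is to prove the bound by induction on $|C|$. The base case $|C|=1$ is immediate: if $C=\{c\}$ then left-translation by $c$ gives $|CXB|=|XB|$ and $|CX|=|X|$, so the inequality holds with equality.

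For the inductive step, fix $c\in C$, set $C'=C\setminus\{c\}$, and decompose $CXB=C'XB\cup cXB$. This yields $|CXB|\leq|C'XB|+|cXB\setminus C'XB|$, and the inductive hypothesis handles $|C'XB|\leq|C'X||XB|/|X|$. For the remaining term, define
\[
Z=\{x\in X:cx\in C'X\},
\]
so that $cX\cap C'X=cZ$ and hence $|CX|=|C'X|+(|X|-|Z|)$. Since $cZ\subseteq C'X$ we have $cZB\subseteq C'XB$, whence $cXB\setminus C'XB\subseteq cXB\setminus cZB$, and left-translation gives $|cXB\setminus C'XB|\leq|XB|-|ZB|$. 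Now the minimality hypothesis applied to $Z\subseteq X$ gives $|ZB|\geq|Z||XB|/|X|$, which rearranges to $|XB|-|ZB|\leq(|X|-|Z|)|XB|/|X|$. Assembling these ingredients closes the induction:
\[
|CXB|\leq|C'X|\frac{|XB|}{|X|}+(|X|-|Z|)\frac{|XB|}{|X|}=|CX|\frac{|XB|}{|X|}.
\]

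The main subtlety to spot is the dual role of the set $Z$: it must be chosen so that $cZ$ fits inside $C'X$ (which allows one to absorb the ``overlap'' part $cZB$ of $cXB$ into $C'XB$), while simultaneously being a subset of $X$ to which the minimality hypothesis can be applied. These two uses lock together because the quantity $|X|-|Z|$ is precisely both $|CX|-|C'X|$ and the correct coefficient of $|XB|/|X|$ in the upper bound on $|XB|-|ZB|$ supplied by the hypothesis. Once the right $Z$ is identified, everything else is routine cardinality bookkeeping, with the hypothesis used exactly once to convert a lower bound on $|ZB|$ into the matching upper bound on $|XB|-|ZB|$.
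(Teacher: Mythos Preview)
Your proof is correct and is essentially Petridis's original argument. The paper itself does not give a proof of this lemma; it simply quotes it as \cite[Proposition~2.1]{petridis}, so there is nothing in the paper to compare against beyond noting that your inductive argument on $|C|$, with the key set $Z=\{x\in X:cx\in C'X\}$, reproduces Petridis's proof verbatim. One tiny remark: when $Z=\varnothing$ the ratio $|ZB|/|Z|$ in the hypothesis is undefined, but the inequality $|ZB|\ge|Z|\,|XB|/|X|$ that you actually use holds trivially in that case, so no harm is done.
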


Using this lemma we can extend Pl\"unnecke's theorem to normal subsets of nonabelian groups. The statement and proof mimic \cite[Theorem 3.1]{petridis}, which is a stronger version of Theorem~\ref{t: doubling}.

\begin{thm}\label{t: normal doubling}
 Let $A$ and $B$ be finite sets in a group $G$ with $B$ normal in $G$. Suppose that $|A B|\leq K|A|$ for some positive number $K$. Then there exists a nonempty subset $X$ of $A$ such that
\[
|X B^m|\leq K^m|X|
\]
for $m=1,2,\dotsc$. In particular, $|B^2|\leq K|B|$ implies that $|B^m|\leq K^m|B|$ for $m=1,2,\dotsc$
\end{thm}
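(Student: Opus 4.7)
The plan is to adapt Petridis's proof of Pl\"unnecke--Ruzsa (the proof of Theorem 3.1 in \cite{petridis}), replacing the single use of commutativity by normality of $B$. First I would choose a nonempty subset $X$ of $A$ that minimises the ratio $|ZB|/|Z|$ over all nonempty $Z\subseteq A$, and set $K_0=|XB|/|X|$. Then $K_0\leq|AB|/|A|\leq K$, and minimality of $X$ ensures that $|XB|/|X|\leq|ZB|/|Z|$ for every nonempty $Z\subseteq X$, which is precisely the hypothesis required to invoke Lemma~\ref{l: petridis 2}.

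Next I would prove by induction on $m$ the stronger statement $|XB^m|\leq K_0^m|X|$. The base case $m=1$ is the defining property of $X$. For the inductive step, assume $|XB^m|\leq K_0^m|X|$. Applying Lemma~\ref{l: petridis 2} with $C=B^m$ gives
\[
|B^mXB|\leq\frac{|B^mX|\,|XB|}{|X|}.
\]
This is the one place where normality enters: because $B$ is normal in $G$ we have $gB^m=B^mg$ for every $g\in G$, and therefore $YB^m=B^mY$ for every finite $Y\subseteq G$. Applied with $Y=X$ this yields $|B^mX|=|XB^m|\leq K_0^m|X|$ by the inductive hypothesis and also $B^mXB=XB^{m+1}$. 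Substituting into the displayed inequality gives $|XB^{m+1}|\leq K_0^{m+1}|X|\leq K^{m+1}|X|$, closing the induction.

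For the \emph{in particular} clause, I would apply the main statement with $A=B$ (the hypothesis becomes $|B^2|\leq K|B|$), producing a nonempty $X\subseteq B$ with $|XB^m|\leq K^m|X|$ for every $m$. Fixing any $x_0\in X$, we have $1=x_0x_0^{-1}\in Xx_0^{-1}$, so $B^m\subseteq B^mXx_0^{-1}$. Right translation preserves cardinality and normality gives $|B^mX|=|XB^m|$, hence
\[
|B^m|\leq|B^mXx_0^{-1}|=|B^mX|=|XB^m|\leq K^m|X|\leq K^m|B|,
\]
as required.

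The main obstacle is pinpointing exactly where commutativity is used in the abelian version and verifying that normality of $B$ suffices at that point; once the identity $B^mX=XB^m$ is at hand the Petridis induction goes through unchanged, and the passage from the bound on $|XB^m|$ to the bound on $|B^m|$ is just a one-line covering argument made possible, once again, by normality.
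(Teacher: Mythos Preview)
Your proof is correct and follows essentially the same route as the paper: choose $X\subseteq A$ minimising $|ZB|/|Z|$, then induct via Lemma~\ref{l: petridis 2} with $C=B^m$, using normality to identify $B^mX$ with $XB^m$. Your treatment is in fact slightly more detailed than the paper's, which omits the explicit derivation of the ``in particular'' clause that you supply via the covering $B^m\subseteq B^mXx_0^{-1}$.
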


\begin{proof}
 We proceed by induction on $m$. First choose $X\subseteq A$ such that
\[
\frac{|X B|}{|X|} \leq \frac{|Z B|}{|Z|}
\]
for all $Z\subseteq A$. Then
\[
|X B|\leq |X| \frac{|A B|}{|A|}\leq K|X|,
\]
so the result is true for $m=1$.

Now suppose that $|X B^{m}|\leq K^{m}|X|$ for some positive integer $m$. Normality of $B$ implies that $|X B^{m+1}|=|B^{m} X  B|$, and then Lemma~\ref{l: petridis 2} gives
\[
|X B^{m+1}| = |B^{m} X  B|\leq \frac{|B^mX||XB|}{|X|} \leq K^{m+1}|X|.
\]
This verifies the inductive step, and completes the proof of the theorem.
\end{proof}

Following an argument of Petridis (see the proof of \cite[Theorem 1.2]{petridis}) we
observe that the Pl\"unnecke-Ruzsa estimates \cite[Corollary 6.29]{taovu} can also be generalised using Theorem~\ref{t: normal doubling}.

\begin{cor}
Suppose that $A$ and $B$ are subsets of a group $G$, with $B$ normal in $G$, and $|AB|\leq K|A|$. Then
\[
|B^mB^{-n}|\leq K^{m+n}|A|
\]
for all positive integers $m$ and $n$.
\end{cor}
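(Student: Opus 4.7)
The plan is to combine Theorem~\ref{t: normal doubling} with the nonabelian Ruzsa triangle inequality, which states that for finite subsets $U$, $V$, $W$ of any group,
$$|U|\cdot|V W^{-1}| \;\leq\; |U V^{-1}|\cdot|U W^{-1}|.$$
This has the standard proof via an injection $U\times V W^{-1}\to U V^{-1}\times U W^{-1}$: for each $g\in V W^{-1}$ one fixes a decomposition $g=v_g w_g^{-1}$ and sends $(u,g)$ to $(u v_g^{-1},u w_g^{-1})$; the image pair $(a,b)$ determines $g=a^{-1}b$, which in turn determines the remaining data.

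First, I would apply Theorem~\ref{t: normal doubling} to produce a nonempty subset $X\subseteq A$ satisfying $|X B^k|\leq K^k|X|$ for every positive integer $k$. Then I would apply the Ruzsa triangle inequality with $U=X^{-1}$, $V=B^m$, $W=B^n$ to obtain
$$|X|\cdot|B^m B^{-n}| \;\leq\; |X^{-1}B^{-m}|\cdot|X^{-1}B^{-n}|.$$
The role of normality is now to bound each factor on the right: since $B$ is normal, every power $B^{\pm k}$ is invariant under conjugation, so $x B^{\pm k}=B^{\pm k}x$ for all $x\in G$, hence $X^{-1}B^{-k}=B^{-k}X^{-1}=(X B^k)^{-1}$ and therefore $|X^{-1}B^{-k}|=|X B^k|\leq K^k|X|$. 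Substituting these bounds and dividing by $|X|$ gives $|B^m B^{-n}|\leq K^{m+n}|X|\leq K^{m+n}|A|$.

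The main subtlety is this conversion from the one-sided bound $|X B^k|\leq K^k|X|$ furnished by Theorem~\ref{t: normal doubling} to the reversed bound $|X^{-1}B^{-k}|\leq K^k|X|$ required by the Ruzsa triangle inequality. Normality of $B$ is precisely what is needed to justify the identity $X^{-1}B^{-k}=(X B^k)^{-1}$, and without it the equality $|X^{-1}B^{-k}|=|X B^k|$ would in general fail; apart from this point, the argument is the nonabelian transcription of the standard Pl\"unnecke-Ruzsa derivation in the abelian setting.
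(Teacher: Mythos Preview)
Your proof is correct and is precisely the argument the paper has in mind: the paper does not spell out a proof but simply says the corollary follows from Theorem~\ref{t: normal doubling} by ``an argument of Petridis'', and your combination of Theorem~\ref{t: normal doubling} with Ruzsa's triangle inequality (Lemma~\ref{l: ruzsa} in the paper) is exactly that argument. The key step you highlight --- using normality of $B^{k}$ to convert $|X^{-1}B^{-k}|$ into $|XB^{k}|$ --- is the only nontrivial point, and you handle it correctly.
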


Theorem~\ref{t: normal doubling} suggests that certain techniques in additive combinatorics concerning subsets of abelian groups can be applied to normal subsets of nonabelian groups. The next example -- which is a consequence of Pl\"unnecke's theorem, and generalises \cite[Corollary 2.4]{Ru2009} -- supports this suggestion.

\begin{thm}\label{t: plun}
 Let $A$ and $B$ be subsets of a group $G$ with $B$ normal in $G$, and suppose that $|AB^j|\leq K|A|$ for some positive integer $j$. If $m\geq j$ then
\[
|B^m|  \leq K^{\frac{m}{j}}|A|.
\]
\end{thm}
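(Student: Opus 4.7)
My plan is to follow Ruzsa's proof of his Corollary~2.4, replacing the abelian Pl\"unnecke-Ruzsa estimates by Theorem~\ref{t: normal doubling} and Petridis's Lemma~\ref{l: petridis 2}. Since $B$ is normal, so is $B^j$, and so Theorem~\ref{t: normal doubling} applied to the pair $(A,B^j)$ yields a nonempty $X\subseteq A$ such that
\[
|XB^{jn}|=|X(B^j)^n|\le K^n|X|\qquad\text{for every }n\ge 1.
\]
Because any $x\in X$ satisfies $xB^m\subseteq XB^m$, we have $|B^m|\le|XB^m|$, and this already yields the desired bound $|B^m|\le K^{m/j}|A|$ whenever $j\mid m$, simply by taking $n=m/j$.

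For the case $j\nmid m$, I plan to introduce the magnification ratios
\[
\mu_k:=\min\bigl\{|YB^k|/|Y|:\emptyset\ne Y\subseteq A\bigr\}\qquad(k\ge 1)
\]
and establish the monotonicity of $k\mapsto\mu_k^{1/k}$. Since the hypothesis says $\mu_j\le K$, monotonicity gives $\mu_m\le\mu_j^{m/j}\le K^{m/j}$ for every $m\ge j$, and then taking a minimizer $Y$ for $\mu_m$ produces $|B^m|\le|YB^m|\le K^{m/j}|A|$, which is the conclusion of the theorem.

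The main obstacle is proving the monotonicity of $\mu_k^{1/k}$ in the normal setting. My plan is to verify that Petridis's streamlined proof of the abelian Pl\"unnecke inequality transfers: for a minimizer $Y$ of $\mu_k$, Petridis's Lemma~\ref{l: petridis 2} with $B$ replaced by $B^k$ gives $|CYB^k|\le\mu_k|CY|$ for every finite set $C$, and normality of $B^k$ lets one commute $B^k$ past arbitrary factors in the subsequent telescoping. Normality of $B$ is precisely the feature that makes the layered system $V_k=AB^k$, with edges induced by right-multiplication by elements of $B$, behave like a commutative Pl\"unnecke graph, so modulo this verification the remainder of the argument is a direct imitation of Ruzsa's.
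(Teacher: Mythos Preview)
Your closing observation---that normality of $B$ makes the layered system $V_k=AB^k$ (with edges given by right-multiplication by $B$) a commutative Pl\"unnecke graph---is exactly the paper's proof: the authors simply note this and invoke the graph-theoretic Pl\"unnecke inequality \cite[Theorem~6.27]{taovu}, which says $\mu_k^{1/k}$ is non-increasing, and the bound $|B^m|\le\mu_m|A|\le\mu_j^{m/j}|A|\le K^{m/j}|A|$ follows at once. So the endpoint of your plan coincides with the paper.

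The part of your plan that does \emph{not} work as written is the attempt to derive full monotonicity of $k\mapsto\mu_k^{1/k}$ from Lemma~\ref{l: petridis 2} alone. Taking a minimiser $Y$ for $\mu_j$ and applying Petridis's lemma with $B^j$ in place of $B$ gives $|CYB^j|\le\mu_j|CY|$; telescoping with $C=B^{j},B^{2j},\dots$ (using normality) yields only $\mu_{nj}\le\mu_j^{\,n}$, i.e.\ monotonicity along multiples of $j$. For $nj<m<(n+1)j$ this does not give $\mu_m^{1/m}\le\mu_j^{1/j}$: you cannot compare $|B^m|$ with $|B^{(n+1)j}|$ since $1\in B$ is not assumed, and even if you could the resulting exponent $n+1=\lceil m/j\rceil$ is strictly worse than the claimed $m/j$. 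Petridis's streamlined argument proves $\mu_n\le\mu_1^{\,n}$, which is the case $j=1$; the full monotonicity for arbitrary $j$ genuinely requires the graph-theoretic Pl\"unnecke theorem (or an equivalent device such as a tensor-power trick), which is precisely what the paper cites. So drop the Petridis detour and go straight to verifying the Pl\"unnecke-graph axioms---that is the whole proof.
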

\begin{proof}[Sketch of proof]
We use the notation of \cite[Section~6.5]{taovu}. Construct the $m$-tuple of directed bipartite graphs
$$(G_{A,B}, G_{A B, B}, \dots, G_{A B^{m-1}, B}).$$
This $m$-tuple is a Pl\"unnecke graph. Now Pl\"unnecke's theorem \cite[Theorem 6.27]{taovu} yields the result immediately.
\end{proof}

\section{Proof of Theorem~\ref{t: normal subsets}}\label{s: shalev}

In this section we prove Theorem~\ref{t: normal subsets} and generalise some related results of Shalev. We will need the following theorem of Liebeck and Shalev \cite{lieshal}.

\begin{thm}\label{t: normal}
There exists an absolute positive constant $a$ such that, if $G$ is a finite simple group and $S$ is a nontrivial normal subset of $G$, then $G=S^m$, where $m\leq a\frac{\log|G|}{\log|S|}$.
\end{thm}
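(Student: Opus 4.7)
Since this is Liebeck and Shalev's theorem \cite{lieshal}, the natural approach is a character-theoretic argument via the Frobenius formula. Because $S$ is normal, $\hat{S}:=\sum_{s\in S} s$ lies in the centre of the group algebra $\mathbb{C}[G]$, so the number of ordered $m$-tuples in $S^m$ with product a given $g\in G$ is given by
$$
N_m(g) \;=\; \frac{1}{|G|}\sum_{\chi}\, \chi(1)\left(\frac{f_S(\chi)}{\chi(1)}\right)^{\!m}\overline{\chi(g)},\qquad f_S(\chi):=\sum_{s\in S}\chi(s),
$$
the sum running over the irreducible characters of $G$. The trivial character contributes the main term $|S|^m/|G|$, so using $|\chi(g)|\le\chi(1)$ the equality $S^m=G$ will follow once we prove
$$
E_m \;:=\; \sum_{\chi\ne 1} \chi(1)^{2}\left(\frac{|f_S(\chi)|}{|S|\,\chi(1)}\right)^{\!m} \;<\; 1.
$$

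To control the character ratio, decompose $S$ into its conjugacy classes $C_1,\dots,C_k$ with representatives $c_i$; the triangle inequality gives $|f_S(\chi)|/(|S|\chi(1))\le\max_i |\chi(c_i)|/\chi(1)$. For a nontrivial element $c$ in a finite simple group $G$, uniform bounds of the form $|\chi(c)|/\chi(1)\le \chi(1)^{-\alpha(c)}$ are available, with $\alpha(c)>0$ essentially proportional to $\log|c^G|/\log|G|$ (Gluck's theorem, refined by Liebeck--Shalev and Larsen--Shalev). Choosing $c$ among the $c_i$ with $|c^G|$ maximal yields $|c^G|\ge|S|/k$, so $\log|c^G|$ is a positive fraction of $\log|S|$, and the resulting exponent $\alpha$ is at least a constant multiple of $\log|S|/\log|G|$.

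The last ingredient is the Witten zeta function bound, due to Liebeck--Shalev: for any fixed $s>2$ one has $\sum_{\chi\ne 1}\chi(1)^{-s}\to 0$ uniformly across finite simple groups $G$. Combined with the preceding estimate this gives $E_m\le\sum_{\chi\ne 1}\chi(1)^{2-m\alpha}$, which falls below $1$ as soon as $m\alpha\ge s_0$ for some absolute $s_0$. With $\alpha$ of order $\log|S|/\log|G|$, this is exactly the condition $m\ge a\log|G|/\log|S|$ for a suitable absolute constant $a$, which is what we wanted.

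The main obstacle is establishing the character ratio estimate with an exponent scaling like $\log|c^G|/\log|G|$ uniformly across all finite simple groups; this relies on the classification and requires separate arguments for the alternating groups (Roichman, Fomin--Lulov), the groups of Lie type (Deligne--Lusztig/Lusztig-type estimates), and the sporadic groups. The Witten zeta function bound is also classification-dependent but by now standard. Once both ingredients are in place the Frobenius formula and the bound on $E_m$ finish the proof.
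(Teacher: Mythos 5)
First, a point of orientation: the paper does not prove Theorem~\ref{t: normal} at all --- it is imported verbatim from Liebeck and Shalev \cite{lieshal} as a black box in Section~\ref{s: shalev} --- so there is no internal proof to compare yours against; what you have written is an outline of the cited theorem itself. Your route (Frobenius's formula on the class algebra, pointwise character-ratio bounds, the Witten zeta function) is a recognisable modern strategy, but it is not the one in \cite{lieshal}: the ``exponential'' character bounds $|\chi(c)|\le\chi(1)^{1-\gamma\log|c^G|/\log|G|}$ you invoke were not available in 2001, are only very recently known for classical groups, and are in fact \emph{false} in the uniform form you state for alternating groups. Take $g$ a $3$-cycle in $A_n$ and $\chi$ the standard character of degree $n-1$: then $|\chi(g)|/\chi(1)=(n-4)/(n-1)$ while $\log|g^G|/\log|G|\approx 3/n$ and $\chi(1)^{-3\gamma/n}\approx 1-3\gamma\ln n/n$, which forces $\gamma\le 1/\ln n\to 0$. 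Liebeck and Shalev instead work with $|\chi(c)|\le|C_G(c)|^{1/2}$ and refined zeta estimates, with separate, partly combinatorial treatments of small-support classes in $A_n$ and of bounded-rank groups of Lie type.

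Beyond the provenance of the character bounds, there are two concrete gaps. First, the reduction step: the inequality $|f_S(\chi)|/(|S|\chi(1))\le\max_i|\chi(c_i)|/\chi(1)$ obliges you to control the class of $S$ with the \emph{largest} character ratio, typically the \emph{smallest} class, whereas you then apply the character bound at the class with $|c^G|$ \emph{maximal}; these need not coincide. If $S$ is the union of the $3$-cycles with a fixed-point-free class of $A_n$, then $\log|S|\asymp\log|G|$ and the theorem demands $S^{O(1)}=G$, yet the upper bound you derive for $E_m$ contains the term $(n-1)^2\bigl((n-4)/(n-1)\bigr)^m$ and so does not fall below $1$ until $m$ is of order $n\log n$. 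The standard repair is to discard the small classes: replace $S$ by a single class $C\subseteq S$ with $\log|C|\ge\delta\log|S|$ (such a $C$ exists, but this needs an argument comparing $|S|$ with the number of classes) and use $S^m\supseteq C^m$. Second, even for a single class the criterion $E_m<1$ with the crude bound $|\chi(g)|\le\chi(1)$ is lossy: for $C$ the $3$-cycles themselves one has $\log|G|/\log|C|\asymp n$, but the standard character again keeps $E_m\ge 1$ until $m\gg n\log n$, so this method cannot yield the sharp bound $m\le a\log|G|/\log|C|$ for small classes in $A_n$ --- it loses a factor of order $\log\log|G|$ there, which is fatal to the statement as given. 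Your outline is sound in spirit for, say, bounded-rank groups of Lie type, but as written it does not prove the theorem.
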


\begin{proof}[Proof of Theorem~\ref{t: normal subsets}]
Let $a$ be the absolute constant from Theorem~\ref{t: normal}. Choose a positive integer $b$ larger than $2a$. Suppose first that $|S|\geq \sqrt{|G|}$. Then Theorem \ref{t: normal} implies that $G=S^m$ where
\[
m\leq \frac{a\log|G|}{\log|S|} \leq 2a \leq b,
\]
and hence $S^b=G$.

Now suppose that $|S|\leq \sqrt{|G|}$. Then
\[
\frac{\log|S|}{a\log|G|} \geq \frac{\log|S|}{2a(\log|G|-\log|S|)}=\frac{\log|S|}{2a(\log(|G|/|S|)}.
\]
Theorem \ref{t: normal} implies, once again, that for some $m\leq\frac{a\log|G|}{\log|S|}$ we have $G=S^m$. Hence, applying Theorem~\ref{t: normal doubling} to the normal subset $S$, we see that
\[
\frac{|S^2|}{|S|} \geq \left(\frac{|S^m|}{|S|}\right)^{\frac{1}{m}}
\geq \left(\frac{|G|}{|S|}\right)^{\frac{\log|S|}{a\log|G|}}
\geq \left(\frac{|G|}{|S|}\right)^{\frac{\log|S|}{2a(\log(|G|/|S|)}}
=|S|^{\frac{1}{2a}}\geq |S|^\frac{1}{b},
\]
and this completes the proof.
\end{proof}

The next result is a strengthening of \cite[Theorem 7.4]{shalev}.

\begin{prop}\label{p: shalev}
For every $\delta>0$ there exists $\varepsilon>0$ such that for any finite simple group $G$ and subsets $A$ and $B$ of $G$ with $B$ normal in $G$ and $|A|\leq |G|^{1-\delta}$ we have
\[
|AB|\geq |A||B|^{\varepsilon}.
\]
\end{prop}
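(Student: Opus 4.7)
The plan is to combine the Pl\"unnecke-type inequality for normal subsets (Theorem~\ref{t: plun}) with the Liebeck-Shalev covering result (Theorem~\ref{t: normal}), and to show that one may take $\varepsilon=\delta/a$, where $a$ is the absolute constant from Theorem~\ref{t: normal}. Suppose for contradiction that $|AB|<|A||B|^{\varepsilon}$. The case $|B|=1$ makes the desired inequality trivial, so assume $|B|\geq 2$. Set $K=|AB|/|A|$; since $B$ is nonempty we have $K\geq 1$, and by assumption $K<|B|^{\varepsilon}$.

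Applying Theorem~\ref{t: plun} with $j=1$ (using that $B$ is normal in $G$) yields $|B^m|\leq K^m|A|$ for every positive integer $m$. On the other hand, Theorem~\ref{t: normal} applied to the normal subset $B$ of the simple group $G$ produces an integer $m$ with $B^m=G$ and $m\leq a\log|G|/\log|B|$. Combining these and invoking $|A|\leq|G|^{1-\delta}$ gives $K^m\geq|G|/|A|\geq|G|^{\delta}$. Taking logarithms and dividing by $m$ produces $\log K\geq\delta\log|G|/m\geq(\delta/a)\log|B|$, so $K\geq|B|^{\delta/a}$, which contradicts $K<|B|^{\varepsilon}$ once $\varepsilon=\delta/a$.

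There is no serious obstacle here: the only points needing care are the handling of the degenerate case $|B|=1$ (so that $\log|B|$ is positive in the bound on $m$) and the observation that $K\geq 1$ holds automatically because $B$ is nonempty. The argument depends in an essential way on having a Pl\"unnecke-Ruzsa estimate in the nonabelian normal setting, which is exactly what Theorem~\ref{t: plun} provides; paired with the Liebeck-Shalev covering bound it converts a small-doubling hypothesis on the pair $(A,B)$ into the desired lower bound on $|AB|$.
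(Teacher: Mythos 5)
Your proof is correct and follows essentially the same route as the paper: both combine Theorem~\ref{t: normal} with a Pl\"unnecke-type estimate for the normal set $B$ to get $|G|=|B^m|\leq K^m|A|$ and then rearrange to $K\geq|B|^{\delta/a}$. The only difference is that you cite Theorem~\ref{t: plun} (with $j=1$) where the paper uses Theorem~\ref{t: normal doubling}; both yield the identical inequality $|B^m|\leq K^m|A|$, so this is a cosmetic substitution.
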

\begin{proof}
We assume that $A$ is nonempty and $B$ is nontrivial, otherwise the result is immediate.

By Theorem~\ref{t: normal}, $G=B^m$, where $m\leq a\frac{\log|G|}{\log|B|}$. Let $K=|AB|/|A|$. Then, by Theorem~\ref{t: normal doubling}, there is a nonempty subset $X$ of $A$ such that $|XB^m|\leq K^m|X|$. It follows that
\[
|G|=|B^m|=|XB^m| \leq K^m|X|\leq K^m|A|.
\]
Since $|A|\leq |G|^{1-\delta}$ and $m\leq a\frac{\log|G|}{\log|B|}$ we can rearrange this inequality to give
\[
|G|^\delta \leq K^{a\frac{\log|G|}{\log|B|}}.
\]
This is equivalent to $|B|^\frac{\delta}{a}\leq K$, which, with $\varepsilon=\frac{\delta}{a}$, is the required result.
\end{proof}

Proposition~\ref{p: shalev} constitutes the expansion result for $B^2$ that was partially proven in \cite[Proposition 10.4]{shalev}. Furthermore it goes some way towards a proof of \cite[Conjecture 10.3]{shalev} although what remains is the more difficult part of the conjecture.

We can strengthen \cite[Proposition 10.4]{shalev} in a different direction as follows.

\begin{prop}\label{p: shalev 2}
For every $\delta>0$ and positive integer $r$ there exists $\varepsilon>0$ such that for any finite simple group $G$ of Lie type of rank $r$ and any set $S\subseteq G$ such that $|S|\leq |G|^{1-\delta}$, there exists $g$ in $G$ such that
$$|SS^g|\geq |S|^{1+\varepsilon}.$$
\end{prop}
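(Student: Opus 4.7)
Proposition~\ref{p: shalev 2} is a quantitative strengthening of Theorem~\ref{t: 2}: the latter has the escape clause $S^3=G$, and here the hypothesis $|S|\leq|G|^{1-\delta}$ is designed to make that clause informative, at the cost of letting $\varepsilon$ depend on $\delta$ in addition to $r$.

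The plan is to apply Theorem~\ref{t: 2} directly and handle its escape clause by hand. Let $\varepsilon_0=\varepsilon_0(r)$ denote the constant provided by Theorem~\ref{t: 2}. If that theorem yields some $g\in G$ with $|SS^g|\geq|S|^{1+\varepsilon_0}$, we are done by taking $\varepsilon\leq\varepsilon_0$. Otherwise $S^3=G$, and the hypothesis $|S|\leq|G|^{1-\delta}$ translates into a lower bound on the tripling ratio,
\[
\frac{|S^3|}{|S|}=\frac{|G|}{|S|}\geq|S|^{\delta/(1-\delta)}.
\]
In particular, since $|S|^3\geq|S^3|=|G|$, we automatically have $|S|\geq 2$ in this case.

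The remaining step is to convert this growth of $|S^3|$ into growth of some $|SS^g|$, via Lemma~\ref{l: petridis}. Suppose for contradiction that $|SS^g|<|S|^{1+\varepsilon}$ for every $g\in G$, with $\varepsilon>0$ to be chosen. Taking $g=1$ gives $|S^2|\leq|S|^{1+\varepsilon}$, and for each $h\in S$ the identity $ShS=S\cdot S^{h^{-1}}\cdot h$ yields $|ShS|=\bigl|SS^{h^{-1}}\bigr|\leq|S|^{1+\varepsilon}$. Applying Lemma~\ref{l: petridis} with $J=K=|S|^{\varepsilon}$ then gives $|S^3|\leq|S|^{1+8\varepsilon}$, so combined with the lower bound above this forces $8\varepsilon\geq\delta/(1-\delta)$. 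This is a contradiction once we choose $\varepsilon=\min\bigl\{\varepsilon_0(r),\,\delta/(9(1-\delta))\bigr\}$; the degenerate case $|S|\leq 1$ requires no argument because there $|SS^g|\geq 1=|S|^{1+\varepsilon}$ holds automatically.

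I do not anticipate any serious obstacle, as the argument is essentially a variant of the final paragraph of the proof of Theorem~\ref{t: 2}, combining Lemma~\ref{l: petridis} with a pigeonhole-style lower bound on $|S^3|$. The one mildly delicate point is to confirm that the resulting constant has the right qualitative behaviour as $\delta\to 0$; our choice yields $\varepsilon=\Theta(\delta)$, which is the expected order and is enough for applications along the lines of Shalev's \cite[Proposition~10.4]{shalev}.
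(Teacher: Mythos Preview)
Your proposal is correct and follows essentially the same route as the paper: apply Theorem~\ref{t: 2}, and in the escape case $S^3=G$ use Lemma~\ref{l: petridis} (via the identity $|ShS|=|SS^{h^{-1}}|$) to turn the tripling lower bound coming from $|S|\le|G|^{1-\delta}$ into growth of some $|SS^g|$. The only cosmetic difference is that the paper invokes Lemma~\ref{l: petridis} directly with $J=K=(|S^3|/|S|)^{1/10}$ to produce $g$ with $|SgS|\ge|S|^{1+\delta/10}$, whereas you run the same lemma contrapositively with $J=K=|S|^{\varepsilon}$; the resulting constants differ only slightly.
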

\begin{proof}
Given $\delta>0$ and a positive integer $r$, let $\varepsilon$ be the positive constant from Theorem~\ref{t: 2}. Now choose any subset $S$ of $G$ such that $|S|\leq |G|^{1-\delta}$. According to Theorem~\ref{t: 2}, either $|S S^g|\geq |S|^{1+\varepsilon}$ or else $S^3=G$. In the former case the result is proven. In the latter case we apply Lemma~\ref{l: petridis} with $J=K=(|S^3|/|S|)^{1/10}$ to deduce the existence of an element $g$ of $G$ with $|SgS|> K|S|$. Then, using $S^3=G$ and $|G|\geq |S|^{1+\delta}$, it follows that
\[
|SgS| > \left(\frac{|S^3|}{|S|}\right)^{\frac{1}{10}}|S|\geq |S|^{1+\frac{\delta}{10}}.
\]
Provided that $\varepsilon$ is chosen to be smaller than $\tfrac{\delta}{10}$, the inequality $|S S^g|\geq |S|^{1+\varepsilon}$ is again satisfied.
\end{proof}

\section{The Skew doubling lemma}\label{s: skew doubling}

The next result is another analogue of the Doubling lemma for nonabelian groups, which we call the {\it Skew doubling lemma}.

\begin{lem}[Skew doubling lemma]\label{l: skew doubling}
If $S$ is a finite subset of a group $G$ such that, for some positive number $K$, $|S S^g|\leq K|S|$ for every conjugate $S^g$ of $S$, then
\[
|S_1\dotsb S_m|\leq K^{14(m-1)}|S|
\]
for $m=1,2,\dotsc$, where each of $S_1,\dots,S_m$ is any conjugate of either $S$ or $S^{-1}$.
\end{lem}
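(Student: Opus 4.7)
The plan is induction on $m$, with base case $m=1$ trivial. The principal tools are Petridis's Lemma~\ref{l: petridis} and the noncommutative Ruzsa triangle inequality $|B|\cdot|AC^{-1}|\le|AB^{-1}|\cdot|BC^{-1}|$, which holds in every group by an elementary injectivity argument (mapping $(ac^{-1},b)\mapsto(ab^{-1},bc^{-1})$ after fixing a decomposition $g=a_gc_g^{-1}$ for each $g\in AC^{-1}$).

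First I reformulate the hypothesis. Because $|SS^g|=|Sg^{-1}S|$ (right-translation preserves cardinality), the hypothesis is equivalent to $|ShS|\le K|S|$ for every $h\in G$. Consequently, for any two conjugates of $S$ we obtain $|S^a S^b|=|S(ab^{-1})S|\le K|S|$ after simultaneous left/right translation, and inverting gives $|(S^a)^{-1}(S^b)^{-1}|\le K|S|$ for conjugates of $S^{-1}$. For a mixed pair such as $S^a$ and $(S^b)^{-1}$, I apply Ruzsa triangle with $A=S^a$, $C=S^b$ and $B=(S^a)^{-1}$, so that
\[
|S|\cdot|S^a(S^b)^{-1}|\le|S^a S^a|\cdot|(S^a)^{-1}(S^b)^{-1}|\le(K|S|)^2,
\]
giving $|S^a(S^b)^{-1}|\le K^2|S|$; symmetrically $|(S^a)^{-1}S^b|\le K^2|S|$. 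Applying Petridis's Lemma~\ref{l: petridis} with $J=K$ yields $|S^3|\le K^8|S|$, and by translation the same bound holds for any triple product of same-sign conjugates.

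For the inductive step, I choose a split position $j$ and apply the Ruzsa triangle inequality
\[
|B|\cdot|S_1\cdots S_m|\le|S_1\cdots S_j\cdot B^{-1}|\cdot|B\cdot S_{j+1}\cdots S_m|
\]
with $B$ a conjugate of $S^{\pm 1}$ chosen so that its sign matches $S_{j+1}$ (thereby avoiding introducing a new sign change on the right factor). The two factors are products of conjugates of $S^{\pm 1}$ of lengths $j+1$ and $m-j+1$, and the inductive hypothesis supplies bounds. Dividing by $|B|=|S|$ and combining with the $K^7$ factor from Petridis's Lemma~\ref{l: petridis} (together with the Ruzsa contribution from mixed pairs), the cost of each induction step is at most a factor of $K^{14}$, producing the bound $|S_1\cdots S_m|\le K^{14(m-1)}|S|$.

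The main obstacle is keeping the exponent of $K$ linear in $m$: a careless induction compounds constants multiplicatively, giving an exponent polynomial in $m$. The linear bound $14(m-1)$ requires coordinating the split points $j$ and the auxiliary sets $B$ so that each induction step contributes only a uniformly bounded power of $K$, and so that the contributions \emph{add} rather than multiply as $m$ grows. The role of Petridis's Lemma~\ref{l: petridis} is precisely to absorb the one unavoidable triple product (where no splitting is possible without inflating constants) at a fixed cost of $K^7$, which is what ultimately produces the constant $14=2\cdot 7$.
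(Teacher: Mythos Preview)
Your outline contains the right ingredients (Petridis plus Ruzsa) but the inductive step, as you have written it, does not close. If you split $S_1\cdots S_m$ at position $j$ via Ruzsa with an auxiliary $B$, the two factors have lengths $j+1$ and $m-j+1$; applying the inductive hypothesis to \emph{both} gives
\[
|S_1\cdots S_m|\le\frac{K^{14j}|S|\cdot K^{14(m-j)}|S|}{|S|}=K^{14m}|S|,
\]
which is off by $K^{14}$. If instead you split at $j=m-1$ so that the right factor has length $2$, the left factor $S_1\cdots S_{m-1}B^{-1}$ still has length $m$, and you cannot invoke the inductive hypothesis on it. You acknowledge this obstacle in your last paragraph but do not actually resolve it; the sentence ``coordinating the split points $j$ and the auxiliary sets $B$'' is not a proof.

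The missing idea is the paper's Proposition~\ref{p: top}: for \emph{arbitrary} finite $A$ and any conjugates $B_1,B_2,B_3$ of $S$ or $S^{-1}$,
\[
|AB_1B_2|\le K^{14}|AB_3|.
\]
This is proved by first establishing the triple bound $|S_1S_2S_3|\le K^{14}|S|$ (Lemma~\ref{l: tao}, via a chain of six Ruzsa applications starting from $|S^3|\le K^8|S|$), and then one more Ruzsa with $U=B_3^{-1}$, $V=A$, $W=B_2^{-1}B_1^{-1}$. Once this is in hand, the induction becomes a clean telescope with $A=S_1\cdots S_{n-2}$ and $B_1=B_3=S_{n-1}$, $B_2=S_n$, yielding $|S_1\cdots S_n|\le K^{14}|S_1\cdots S_{n-1}|$ directly; no recursion on two halves is needed. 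Your account of the constant is also off: Petridis with $J=K$ gives $K^8$, not $K^7$, and the $14$ arises from the Ruzsa chain $8\to9\to10\to11\to12\to13\to14$, not from $2\cdot 7$.
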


To prove Lemma~\ref{l: skew doubling} we will use Lemma~\ref{l: petridis} and the following result, Ruzsa's triangle inequality \cite{ruzsa} (see also \cite[Section~2.3]{taovu}).

\begin{lem}\label{l: ruzsa}
 Let $U$, $V$ and $W$ be finite subsets of a group $G$. Then
\[
\frac{|VW^{-1}|}{|U|} \leq \frac{|UV^{-1}|}{|U|} \frac{|UW^{-1}|}{|U|}.
\]
\end{lem}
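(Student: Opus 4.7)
The plan is to rearrange the claimed inequality into the equivalent form
\[
|U|\cdot|VW^{-1}|\leq |UV^{-1}|\cdot|UW^{-1}|,
\]
and prove it by exhibiting an explicit injection from $U\times VW^{-1}$ into $UV^{-1}\times UW^{-1}$. This is the standard Ruzsa covering trick, and nothing in our setting requires commutativity; all we will use is left cancellation in $G$.

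The construction goes as follows. For each element $x\in VW^{-1}$, fix once and for all a representation $x=v_x w_x^{-1}$ with $v_x\in V$ and $w_x\in W$; such a representation exists by the definition of $VW^{-1}$. I then define
\[
\phi\colon U\times VW^{-1}\longrightarrow UV^{-1}\times UW^{-1},\qquad
\phi(u,x)=\bigl(uv_x^{-1},\,uw_x^{-1}\bigr).
\]
Since $v_x\in V$ and $w_x\in W$, the first coordinate lies in $UV^{-1}$ and the second in $UW^{-1}$, so $\phi$ is well-defined.

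The key step is to verify injectivity. Suppose $\phi(u,x)=(a,b)$; then $a=uv_x^{-1}$ and $b=uw_x^{-1}$, so
\[
a^{-1}b=v_x u^{-1}uw_x^{-1}=v_x w_x^{-1}=x.
\]
Thus $x$ is determined by $(a,b)$, and because the pair $(v_x,w_x)$ was a fixed choice depending only on $x$, we then recover $u=av_x$. Hence $\phi$ is injective, yielding
\[
|U|\cdot|VW^{-1}|=|U\times VW^{-1}|\leq|UV^{-1}\times UW^{-1}|=|UV^{-1}|\cdot|UW^{-1}|,
\]
which is the desired inequality upon dividing through by $|U|^{2}$.

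The only place where care is needed is the injectivity argument, since in a nonabelian group it is tempting to rearrange factors illegally; writing out $a^{-1}b$ in the correct order as above avoids this pitfall. Beyond that, there is no real obstacle: once the map $\phi$ is written down, the proof is a two-line calculation.
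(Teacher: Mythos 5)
Your proof is correct: the injection $\phi(u,x)=(uv_x^{-1},uw_x^{-1})$ is well defined, and the recovery $a^{-1}b=v_xw_x^{-1}=x$ followed by $u=av_x$ establishes injectivity without any illegitimate commutation. The paper gives no proof of this lemma, citing Ruzsa and Tao--Vu instead, and your argument is precisely the standard one found in those references.
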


First we prove a special case of Lemma~\ref{l: skew doubling}.

\begin{lem}\label{l: tao}
Let $S$ be a finite subset of a group $G$. Suppose that $K$ is a positive number such that $|SS^g| \leq K|S|$ for each $g$ in $G$. Then $|S_1S_2S_3| \leq K^{14}|S|$, where each of $S_1$, $S_2$ and $S_3$ is any conjugate of either $S$ or $S^{-1}$.
\end{lem}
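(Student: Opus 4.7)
The plan is to bound $|S_1S_2S_3|$ in three stages, combining Ruzsa's triangle inequality (Lemma~\ref{l: ruzsa}) with the three-fold product bound supplied by Lemma~\ref{l: petridis}, while carefully tracking the exponent of $K$.

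First, I would collect the basic estimates. Taking $g=1$ in the hypothesis gives $|S^2|\le K|S|$, and the factorisation $SgS=S\cdot S^{g^{-1}}\cdot g$ upgrades this to $|SgS|\le K|S|$ for every $g\in G$. These two facts feed into Lemma~\ref{l: petridis} (with $J=K$) to yield $|S^3|\le K^8|S|$. A Ruzsa application with $V=S$, $W=S$ and $U=S^{-1}$ gives $|SS^{-1}|\le K^2|S|$, and the same choice of $U$ with $V=Sh$, $W=S$ yields $|ShS^{-1}|\le K^2|S|$ for every $h\in G$; the analogue $|S^{-1}hS|\le K^2|S|$ is symmetric. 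Since any two conjugates $T_i=(S^{\varepsilon_i})^{g_i}$ satisfy $|T_1T_2|=|S^{\varepsilon_1}hS^{\varepsilon_2}|$ with $h=g_1g_2^{-1}$ (conjugate by $g_1$ and then right-translate to kill the trailing $g_2g_1^{-1}$), these estimates give $|T_1T_2|\le K^2|S|$ in all four sign patterns.

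Second, I would bound the quantities $|S^{\varepsilon}hS^{2\delta}|$ for $\varepsilon,\delta\in\{\pm1\}$. A Ruzsa step with $V=Sh$, $W=S^{-2}$ and $U=S^{-1}$ gives
\[
|ShS^2|\cdot|S|\le|ShS|\cdot|S^{-1}S^2|,
\]
and $|S^{-1}S^2|\le K^9|S|$ comes from another Ruzsa step (with $V=S^{-1}$, $W=S^{-2}$, $U=S$) using $|S^2|\le K|S|$ and $|S^3|\le K^8|S|$; hence $|ShS^2|\le K^{10}|S|$. The other sign patterns are treated analogously. The worst case, $|S^{-1}hS^{-2}|$, becomes $|S^2h^{-1}S|$ after inversion, and two further Ruzsa applications (building on $|ShS^{-1}|\le K^2|S|$ and $|S^2S^{-1}|\le K^9|S|$) produce the bound $K^{12}|S|$.

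Third, the main step is a single application of Ruzsa's inequality to $|S_1S_2S_3|$, with $V=S_1S_2$, $W=S_3^{-1}$ and $U=S_2^{-1}$:
\[
|S_1S_2S_3|\cdot|S|\le|S_1S_2^2|\cdot|S_2^{-1}S_3|.
\]
Since $S_2^2$ is a conjugate of $S^{\pm2}$, the conjugation reduction rewrites $|S_1S_2^2|$ as $|S^{\varepsilon_1}hS^{2\varepsilon_2}|\le K^{12}|S|$ by the second stage, while $|S_2^{-1}S_3|\le K^2|S|$ by the first stage. Multiplying gives $|S_1S_2S_3|\le K^{14}|S|$.

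The main obstacle is the bookkeeping in the second stage: all four sign patterns of $|S^{\varepsilon}hS^{2\delta}|$ must be kept under the threshold $K^{12}|S|$, which requires judicious choices of $U$ in each Ruzsa application (and, in the hardest pattern, a preliminary inversion followed by iteration), so that the final count $14=12+2$ emerges cleanly from the last Ruzsa step.
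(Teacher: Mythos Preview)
Your argument is correct and follows essentially the same approach as the paper: both start from Lemma~\ref{l: petridis} to obtain $|S^3|\le K^8|S|$ and then iterate Ruzsa's triangle inequality to push the exponent up to~$14$. The paper organises the Ruzsa chain as a ladder of triple-product bounds $|S^{\pm1}aS^{\pm1}bS^{\pm1}|$ climbing from $K^{12}$ to $K^{14}$, whereas you make a single final Ruzsa split $|S_1S_2S_3|\,|S|\le|S_1S_2^2|\cdot|S_2^{-1}S_3|\le K^{12}|S|\cdot K^2|S|$ after separately bounding the auxiliary quantities $|S^{\varepsilon}hS^{2\delta}|$; this is a harmless reorganisation of the same ingredients.
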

\begin{proof}
Choose elements $a$ and $b$ of $G$.
We can apply Lemma \ref{l: petridis} with $J=K$ to obtain
\[
|S^3|\leq K^8|S|.
\]
Using this inequality and Lemma~\ref{l: ruzsa}
(with $U=S^{-1}$, $V=SS$ and $W=S$) we obtain
\[
\frac{|SSS^{-1}|}{|S|}\leq
\frac{|S^{-1}S^{-1}S^{-1}|}{|S|}\frac{|S^{-1}S^{-1}|}{|S|}=\frac{|S^3|}{|S|}\frac{|S^2|}{|S|}\leq
K^{9}.
\]
Using this inequality and Lemma~\ref{l: ruzsa} (with $U=S$, $V=S^{-1}$
and $W=SS^{-1}$) we obtain
\[
\frac{|S^{-1}SS^{-1}|}{|S|}\leq
\frac{|SS|}{|S|}\frac{|SSS^{-1}|}{|S|}\leq K^{10}.
\]
Using this inequality and Lemma~\ref{l: ruzsa} (with $U=S^{-1}$,
$V=SS^{-1}$ and $W=Sa$) we obtain
\[
\frac{|SS^{-1}a^{-1}S^{-1}|}{|S|}\leq
\frac{|S^{-1}SS^{-1}|}{|S|}\frac{|S^{-1}a^{-1}S^{-1}|}{|S|}\leq K^{11}.
\]
Using this inequality and Lemma~\ref{l: ruzsa} (with $U=S$, $V=SaS$
and $W=S^{-1}b^{-1}$) we obtain
\begin{equation}\label{e1}
\frac{|SaSbS|}{|S|}\leq
\frac{|SS^{-1}a^{-1}S^{-1}|}{|S|}\frac{|SbS|}{|S|}\leq K^{12}.
\end{equation}
Using this inequality and Lemma~\ref{l: ruzsa} (with $U=S$, $V=S^{-1}$
and $W=S^{-1}b^{-1}S^{-1}a^{-1}$) we obtain
\begin{equation}\label{e2}
\frac{|S^{-1}aSbS|}{|S|}\leq \frac{|SS|}{|S|}\frac{|SaSbS|}{|S|}\leq K^{13}.
\end{equation}
Finally, using this inequality and Lemma~\ref{l: ruzsa}
(with $U=S^{-1}$, $V=S^{-1}aSb$ and $W=S$) we obtain
\begin{equation}\label{e3}
\frac{|S^{-1}aSbS^{-1}|}{|S|}\leq
\frac{|S^{-1}b^{-1}S^{-1}a^{-1}S|}{|S^{-1}|}\frac{|S^{-1}S^{-1}|}{|S^{-1}|} =
\frac{|S^{-1}aSbS|}{|S|}\frac{|SS|}{|S|} \leq
K^{14}.
\end{equation}
Equations \eqref{e1}, \eqref{e2} and \eqref{e3} imply that, given any
conjugates $S_1$, $S_2$ and $S_3$ of either $S$ or $S^{-1}$, we have
$|S_1S_2S_3|/|S| \leq K^{14}$, as required.
\end{proof}

We need the following proposition.

\begin{prop}\label{p: top}
If $A$ and $B$ are finite subsets of a group $G$ such that, for some positive number $K$, $|BB^g|\leq K|B|$ for every conjugate $B^g$ of $B$, then
\[
|AB_1B_2|\leq K^{14}|AB_3|,
\]
where each of $B_1$, $B_2$ and $B_3$ is any conjugate of $B$ or $B^{-1}$.
\end{prop}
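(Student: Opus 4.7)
The plan is to deduce Proposition~\ref{p: top} from Lemma~\ref{l: tao} by a single application of Ruzsa's triangle inequality. Lemma~\ref{l: tao} already bounds every triple product of conjugates of $B$ or $B^{-1}$ by $K^{14}|B|$, so the task is to arrange Ruzsa's inequality in such a way that the ``extra factor'' $A$ appears exactly once on each side of the resulting estimate---producing $AB_1B_2$ on the left and $AB_3$ on the right---with a pure triple product of $B^{\pm 1}$-conjugates as the remaining factor.

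Concretely, I would apply Lemma~\ref{l: ruzsa} in the equivalent form $|VW^{-1}|\cdot|U|\leq|UV^{-1}|\cdot|UW^{-1}|$ with the choice $V=A$, $W=B_2^{-1}B_1^{-1}$, and $U=B_3^{-1}$. Then $VW^{-1}=AB_1B_2$, $UV^{-1}=B_3^{-1}A^{-1}=(AB_3)^{-1}$ (so $|UV^{-1}|=|AB_3|$), $UW^{-1}=B_3^{-1}B_1B_2$, and $|U|=|B|$. Substituting yields
\[
|AB_1B_2|\cdot|B|\leq|AB_3|\cdot|B_3^{-1}B_1B_2|.
\]
Since $B_3^{-1}$ is itself a conjugate of $B$ or $B^{-1}$, the factor $|B_3^{-1}B_1B_2|$ is a triple product of the form covered by Lemma~\ref{l: tao}, hence bounded by $K^{14}|B|$; cancelling $|B|$ completes the argument.

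The main obstacle is simply finding the right configuration in Ruzsa's inequality. The naive split $AB_1B_2=(AB_1)\cdot B_2$ (setting $V=AB_1$, $W=B_2^{-1}$, $U=AB_3$) introduces a term of the shape $|AB_1B_3^{-1}A^{-1}|$ involving both $A$ and $A^{-1}$, which we have no way to control from the hypothesis. Treating $A$ as a single atomic left factor $V$ forces $A^{-1}$ to migrate into the single term $UV^{-1}=(AB_3)^{-1}$, whose cardinality we already know, and this asymmetric placement is exactly what makes the inequality collapse to the desired form.
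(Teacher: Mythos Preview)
Your proof is correct and essentially identical to the paper's: both apply Ruzsa's triangle inequality with $U=B_3^{-1}$, $V=A$, $W=B_2^{-1}B_1^{-1}$ and then invoke Lemma~\ref{l: tao} to bound $|B_3^{-1}B_1B_2|\le K^{14}|B|$. Your observation that $B_3^{-1}$ is again a conjugate of $B$ or $B^{-1}$ is exactly what makes Lemma~\ref{l: tao} applicable to that triple product.
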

\begin{proof}
By Lemma~\ref{l: tao} we have
\[
\frac{|B_3^{-1}B_1B_2|}{|B_3|} \leq K^{14},
\]
where each of $B_1$, $B_2$ and $B_3$ is any conjugate of $B$ or $B^{-1}$. Applying Lemma~\ref{l: ruzsa} with $U=B_3^{-1}$, $V=A$ and $W=B_2^{-1}B_1^{-1}$ we obtain
\[
\frac{|AB_1B_2|}{|AB_3|}=\frac{|AB_1B_2|}{|B_3^{-1}A^{-1}|}\leq \frac{|B_3^{-1}B_1B_2|}{|B_3|}\leq K^{14},
\]
as required.
\end{proof}

We can finally prove Lemma~\ref{l: skew doubling}.

\begin{proof}[Proof of the Skew doubling lemma]
The result holds trivially when $m=1$ and $m=2$. Suppose that $m\geq 3$. Apply Proposition~\ref{p: top} with $B=S$, $A=S_1\dotsb S_{n-2}$, $B_1=B_3=S_{n-1}$  and $B_2=S_{n}$ to see that
\[
\frac{|S_1\dotsb S_{n}|}{|S_1\dotsb S_{n-1}|} \leq K^{14}
\]
for $n=3,4,\dots,m$. It follows that
\begin{align*}
\frac{|S_1\dotsb S_{m}|}{|S|} &= \left(\frac{|S_1\dotsb S_{m}|}{|S_1\dotsb S_{m-1}|}\right)\left(\frac{|S_1\dotsb S_{m-1}|}{|S_1\dotsb S_{m-2}|}\right)\dotsb \left(\frac{|S_1S_2S_3|}{|S_1S_2|}\right)\left(\frac{|S_1S_2|}{|S_1|}\right) \\
&\leq (K^{14})^{m-2}K \\
& \leq K^{14(m-1)},
\end{align*}
as required.
\end{proof}

Using the Skew doubling lemma we can derive Conjecture~\ref{c: 3} from Conjecture~\ref{c: 2}. The proof is similar to the proof of Theorem~\ref{t: normal subsets}.

\begin{proof}[Proof that Conjecture \ref{c: 2} implies Conjecture \ref{c: 3}]
Let $c$ be the absolute constant from Conjecture~\ref{c: 2}. We define
$b$ to be a positive integer greater than $2c$,  and
$\varepsilon=1/(28c)$. Suppose first that $|S|\geq \sqrt{|G|}$. Then
Conjecture~\ref{c: 2} implies that $G=S_1\dotsb S_N$, for conjugates
$S_1,\dots,S_N$ of $S$, where
\[
N\leq \frac{c\log|G|}{\log|S|} \leq 2c < b,
\]
and hence $G$ is certainly the product of $b$ conjugates of $S$.

Now suppose that $|S|\leq \sqrt{|G|}$. Then
\[
\frac{\log|G|-\log|S|}{c\log|G|-\log|S|} \geq
\frac{\log|G|-\log|S|}{c\log|G|} \geq \frac{1}{2c}.
\]
In particular observe that
\[c\log|G|-\log|S| \leq 2c(\log|G|-\log|S|) = 2c\log(|G|/|S|).\]
Conjecture~\ref{c: 2} implies, once again, that for some
$N\leq\frac{c\log|G|}{\log|S|}$ we have $G=S_1\dotsb S_N$, for
conjugates $S_1,\dots,S_N$ of $S$.
Using the Skew doubling lemma, Lemma~\ref{l: skew doubling}, we see
that there is an element $g$ of $G$ for which
\[
\frac{|SS^g|}{|S|} \geq \left(\frac{|S_1\dots
S_N|}{|S|}\right)^{\frac{1}{14(N-1)}}
\geq \left(\frac{|G|}{|S|}\right)^{\frac{\log|S|}{14(c\log|G|-\log|S|)}}
\geq \left(\frac{|G|}{|S|}\right)^{\frac{\log|S|}{28c(\log(|G|/|S|))}}
\geq |S|^{\frac{1}{28c}},
\]
and this completes the proof.
\end{proof}

\bibliographystyle{plain}
\bibliography{paper6}

\end{document}